\theoremstyle{plain}
    \newtheorem{theorem}[equation]{Theorem}
    \newtheorem{lemma}[equation]{Lemma}
    \newtheorem{corollary}[equation]{Corollary}
    \newtheorem{proposition}[equation]{Proposition}
\theoremstyle{definition}
    \newtheorem*{remark*}{Remark}
    \newtheorem*{remarks*}{Remarks}
    \newtheorem*{example*}{Example}
    \newtheorem*{definition*}{Definition}
    \newcommand{\C}{\mathbb{C}}
    \newcommand{\Z}{\mathbb{Z}}
    \renewcommand{\k}{\Bbbk}
   	\renewcommand{\phi}{\varphi}
	\let\epsilon\varepsilon
\newcommand{\germ}{\mathfrak}
\newcommand{\transpose}{\mathrm{t}}
\newcommand{\length}{\ell}
\newcommand{\partition}{P}
   \DeclareMathOperator{\Hom}{Hom}
    \DeclareMathOperator{\End}{End}
    \DeclareMathOperator{\Res}{Res}
    \DeclareMathOperator{\Ad}{Ad}
    \DeclareMathOperator{\GL}{GL}
    \DeclareMathOperator{\Rep}{Rep}
	\DeclareMathOperator{\opp}{opp}
	\DeclareMathOperator{\pind}{i}
	\DeclareMathOperator{\pres}{r}
	\DeclareMathOperator{\diag}{diag}
\keywords{{Harish-Chandra induction, principal series, general linear groups, finite commutative rings}}
\subjclass[2010]{Primary 20G05; Secondary 20C33, 20C15, 20C05.}
\begin{document}

\begin{abstract} We construct, for any finite commutative ring $R$, a family of representations of the general linear group $\GL_n(R)$ whose intertwining properties mirror those of the principal series for $\GL_n$ over a finite field.
\end{abstract}

\title{Principal series for general linear groups over finite \\ commutative rings}

\author{Tyrone Crisp}
\address[Corresponding author]{Department of Mathematics and Statistics, University of Maine, Orono, ME 04469-5752, USA}
  \email{tyrone.crisp@maine.edu}

\author{Ehud Meir}
\address{Institute of Mathematics, University of Aberdeen, Fraser Noble Building, Aberdeen AB24 3UE, UK}
  \email{meirehud@gmail.com}

\author{Uri Onn}
\address{Mathematical Sciences Institute, The Australian National University, Canberra, Australia}
  \email{uri.onn@anu.edu.au}

\date{Last updated \today}
\maketitle

\section{Introduction} Among the irreducible, complex representations of reductive groups over finite fields, the simplest to construct and to classify are the \emph{principal series}: those obtained by Harish-Chandra induction from a minimal Levi subgroup; see, for instance,  \cite{Howlett-Kilmoyer80}. In this paper we  use a generalisation of Harish-Chandra induction to construct a `principal series' of representations of the group $\GL_n(R)$, where $R$ is any finite commutative ring with identity. Our main results assert that the well-known intertwining relations among the principal series for $\GL_n$ over a finite field also hold for the representations that we construct. 

The study of the principal series for reductive groups over finite fields can be viewed as the first step in the program to understand all irreducible complex representations of such groups in terms of what Harish-Chandra called the `philosophy of cusp forms' \cite{Harish-Chandra,Springer_cusp}. This program has met with considerable success. The basic ideas appear already in Green's determination \cite{Green55} of the irreducible characters of $\GL_n(\k)$, where $\k$ is a finite field, and these ideas have since been developed and generalised to a very great extent; see \cite{DM} for an overview. 
 
The theory for groups over finite rings is in a far less advanced state. Most efforts so far have been directed toward groups over principal ideal rings: see {\cite{Hill_Jord, Hill_Reg, Hill_SSandCusp, Hill_nilp, Lusztig1, Lusztig2, Onn, Stasinski, Chen-Stasinski, Stasinski-Stevens, Krakovski-Onn-Singla, Stasinski-survey}}. By contrast, the results presented below are valid for all finite rings, and they depend on the algebraic properties of the base ring in only a very limited way. For instance, we give a uniform construction of a family of irreducible representations of $\GL_n(R)$ for all finite local rings $R$, and to our knowledge these are the first results obtained in this degree of generality. 

The present paper is part of a project whose aim is to extend the philosophy of cusp forms to reductive groups over finite rings. Our construction, which is a special case of a general induction procedure developed in \cite{CMO1},  extends in a  natural way to produce more general `Harish-Chandra series'. The analysis of the intertwining properties of these more general series seems, however, to be substantially more involved than the results for the principal series presented here. See \cite[Section 5]{CMO1} and \cite{CMO3} for some partial results in this more general setting.

\subsection*{Notation and definitions.} Let $R$ be a finite commutative ring with $1$.  Let $G=\GL_n(R)$, let $L\cong (R^\times)^n$ be the  subgroup of diagonal matrices in $G$, and let $U$ and $V$ be the upper-unipotent subgroup and the lower-unipotent subgroup, respectively, in $G$. We  write $G(R)$, $L(R)$, etc., when it is necessary to specify~$R$.

The ring $R$ decomposes as a direct product of local rings: $R\cong R_1\times \cdots \times R_m$, and this decomposition is unique up to permuting the factors  \cite[Theorem VI.2]{McDonald}. There is a corresponding decomposition $G(R)\cong G(R_1)\times \cdots \times G(R_m)$, and similarly for $L$, $U$, and $V$. 
If $R$ is a local ring then we let $N(R)$ be the subgroup of monomial matrices in $G(R)$, that is, products of permutation matrices with diagonal matrices. If $R$ is not local then we define $N(R)=N(R_1)\times\cdots\times N(R_m)$, where the $R_i$ are the local factors of $R$ as above. Let $W(R)=N(R)/L(R)$. It will be convenient to realise $W(R)$ as a subgroup of $G(R)$, as follows: if $R$ is local, then we identify $W(R)$ with the group of permutation matrices; and in the general case we identify $W(R)$ with the product of the permutation subgroups in $G(R)\cong G(R_1)\times\cdots\times G(R_m)$. Note that following Lemma~\ref{lem:local}, we will be able to assume without loss of generality that $R$ is a local ring.

If $\chi:L\to \GL(X)$ is a representation of~$L$ {on a complex} vector space $X$, and if $w\in W$, then we let $w^*\chi$ denote  the representation $\chi\circ {\Ad_w^{-1}}:L\to \GL(X)$.  We let $W_\chi=\{w\in W\ |\ w^*\chi\cong \chi\}$.

For each subgroup $H\subseteq G$ we let $e_H$ denote the idempotent in the complex group ring $\C[G]$ corresponding to the trivial character of $H$: $e_H=|H|^{-1}\sum_{h\in H} h$. Since $L$ normalises $U$ and $V$, the idempotents $e_U$ and $e_V$ commute with $\C[L]$ inside $\C[G]$. 

We consider the functors 
\[
\pind:\Rep(L)\to \Rep(G) \qquad X \mapsto \C[G]e_U e_V \otimes_{\C[L]} X
\]
\[
\pres:\Rep(G)\to \Rep(L) \qquad Y \mapsto e_U e_V\C[G]\otimes_{\C[G]} Y,
\]
where $\Rep(G)$ denotes the category of complex representations, identified in the usual way with the category of left $\C[G]$-modules. This is a special case of the construction defined in \cite[Section 2]{CMO1}, which generalises a definition due to Dat \cite{Dat_parahoric}. The functors $\pind$ and $\pres$ are two-sided adjoints to one another; see \cite[Theorem 2.15]{CMO1} for a proof of this and other basic properties. 

{
\begin{definition*}
Let us say that an irreducible representation of $G$ is in the \emph{principal series} if it is isomorphic to a subrepresentation of $\pind\chi$ for some representation $\chi$ of $L$.
\end{definition*}
}

\begin{example*}\label{ex:HC}
If $R$ is a field, then the map  $\C[G]e_U \xrightarrow{f\mapsto fe_V} \C[G]e_V$
is known to be an isomorphism of $\C[G]$-$\C[L]$ bimodules; see \cite[Theorem 2.4]{Howlett-Lehrer_HC}. It follows that the functors $\pind$ and $\pres$ are naturally isomorphic to the familiar functors of \emph{Harish-Chandra induction and restriction}, i.e.\ the functors of tensor product with the bimodules $\C[G]e_U$ and $e_U\C[G]$, respectively.  The same is not true if $R$ is not a product of fields. Revisiting an example from \cite{CMO1}, let $1_L$ denote the trivial   representation of $L$. Then   $\C[G]e_U\otimes_{\C[L]}1_L\cong \C[G/LU]$, with $G$ acting by permutations of $G/LU$. The number of distinct irreducible subrepresentations in $\C[G/LU]$ depends in a delicate way on the underlying ring $R$; for instance, for {$G=\GL_n(\Z/p^k\Z)$ with $k\geq 2$ and $n\geq 3$}, this number depends on both $p$ and $k$ {\cite{OPV}}. By contrast, it follows from Theorem \ref{thm:main2} below that for any $R$ the number of distinct irreducible subrepresentations of $\pind 1_L $ is equal to $\partition(n)^m$, where $G=\GL_n(R)$, $\partition$ is the partition function, and $m$ is the number of maximal ideals in $R$. 
\end{example*}

\begin{example*}\label{ex:regular}
Suppose that $R$ is a finite discrete valuation ring, with maximal ideal $\mathfrak{m}$ and residue field $\k$, and let $r$ be the largest integer such that $\mathfrak{m}^r\neq 0$. Reduction modulo $\mathfrak m^r$ gives rise to a group extension
\[
0 \to G_r\cong (M_n(\k),+) \to G(R) \to G(R/\mathfrak m^r) \to 0, 
\]
which one can use to study the representations of $G(R)$ via Clifford theory; see \cite{Hill_Jord}, for example. In \cite{Hill_Reg}, Hill identified a class of representations that are particularly amenable to this approach: an irreducible representation $\pi$ of $G(R)$ is called \emph{regular} if its restriction to $G_r$ contains a character whose stabiliser under the adjoint action of~$G(\k)$ has minimal dimension. Explicit constructions of all such representations are given in \cite{Stasinski-Stevens, Krakovski-Onn-Singla}. 

An application of \cite[Theorem 3.4]{CMO1} gives the following criterion for regularity of the induced representations $\pind\chi$: if $\chi$ is an irreducible representation of $L(R)$, then $\pind \chi $ is regular if and only if the restriction of $\chi$ to the subgroup $L(R)\cap G_r\cong \k^n$ has trivial stabiliser under the permutation action of~$S_n$. Moreover, the representations $\pind\chi$, for $\chi$ satisfying the above condition, account for all of the regular representations associated to the split semisimple classes in $M_n(\k)$.

{
For $n=2$, all of the principal series representations of $G(R)=\GL_2(R)$ can be described in terms of regular representations, as follows. Let $\chi:L\to \C^\times$ be an irreducible representation of $L$. If $\pind\chi$ is irreducible, then there is a character $\tau:R^\times \to \C^\times$, an integer $k$, and a regular representation $\pi$ of $G(R/\mathfrak{m}^k)$ associated to a split semisimple class in $M_n(\k)$ such that $\pind\chi$ is isomorphic to the representation $(\tau\circ\det)\otimes \pi$, where $\pi$ is pulled back to a representation of $G(R)$. If $\pind\chi$ is not irreducible, then there is a character $\tau:R^\times \to \C^\times$ such that $\pind\chi$ is isomorphic to the representation $(\tau\circ\det)\otimes (1_G\oplus \operatorname{St})$, where $1_G$ is the trivial representation, and $\operatorname{St}$ is the Steinberg representation of $G(\k)$  pulled back to $G(R)$. (These assertions follow easily from Theorem 3 and Lemma \ref{lem:chi_product}, below, and from \cite[Theorem 3.4]{CMO1}.) For $n\geq 3$ the relationship between the principal series and the regular representations becomes more complicated.
}
\end{example*}

\section{Main results}  We will show that the following  well-known properties of the Harish-Chandra functors are shared by the functors $\pind$ and $\pres$ for $R$ an arbitrary finite commutative ring. 

\begin{theorem}\label{thm:main1}
There is a natural isomorphism $\pres\pind\cong \bigoplus_{w\in W} w^*$ of functors on $\Rep(L)$. Consequently, if $\chi$ and $\sigma$ are irreducible   representations of $L$, then 
\[
\dim_{\C} \left(\Hom_G(\pind \chi,\pind\sigma)\right) = \#\{w\in W\ |\ w^*\chi=\sigma\}.
\]
\end{theorem}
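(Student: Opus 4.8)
The plan is to compute the bimodule $e_Ue_V\,\C[G]e_Ue_V$ over $\C[L]$ and show it is isomorphic, as a $\C[L]$-$\C[L]$ bimodule, to $\bigoplus_{w\in W}\,{}^w\C[L]$, where ${}^w\C[L]$ is $\C[L]$ with the bimodule structure twisted on one side by $\Ad_w$. Since $\pres\pind X \cong e_Ue_V\C[G]e_Ue_V\otimes_{\C[L]}X$, such an isomorphism gives the first assertion; the dimension formula for $\Hom_G(\pind\chi,\pind\sigma)$ then follows by adjunction ($\pind$ and $\pres$ are two-sided adjoints, as recalled from \cite[Theorem 2.15]{CMO1}), since $\Hom_G(\pind\chi,\pind\sigma)\cong\Hom_L(\chi,\pres\pind\sigma)\cong\bigoplus_{w\in W}\Hom_L(\chi,w^*\sigma)$, and for irreducible $\chi,\sigma$ each summand is $\C$ if $w^*\chi\cong\sigma$ and $0$ otherwise — wait, one must be careful: $\Hom_L(\chi,w^*\sigma)\neq 0$ iff $\chi\cong w^*\sigma$ iff $\sigma\cong (w^{-1})^*\chi$, and as $w$ ranges over $W$ so does $w^{-1}$, so the count is indeed $\#\{w\in W\mid w^*\chi=\sigma\}$ once we fix representatives so that the isomorphism is an equality.

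First I would reduce to the case where $R$ is local, using the product decomposition $R\cong R_1\times\cdots\times R_m$: since $G$, $L$, $U$, $V$, and $W$ all decompose as corresponding products, the idempotents $e_U,e_V$ and the functors $\pind,\pres$ are compatible with tensor products over the factors, so the general statement follows from the local case (this is the content of the forward-referenced Lemma~\ref{lem:local}). Then, with $R$ local, I would analyze the double cosets $L\backslash G/L$. The key geometric input is a Bruhat-type decomposition: every element of $\GL_n(R)$ over a local ring lies in $LwU$ — wait, this is false in general over a local ring, so instead I expect the relevant statement to be the $LU\backslash G/LV$ or $L\backslash G/L$ structure as it interacts with the product $e_Ue_V$. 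The crucial point is to understand $e_U g\, e_V$ for $g\in G$: one shows that $e_U\C[G]e_V = \bigoplus_{w\in W} e_U\C[LwL]e_V$ and that for each $w$, the space $e_U\C[LwL]e_V$ is free of rank one as a left (equivalently right) $\C[L]$-module, with the two module structures related by $w$.

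The main obstacle — and the heart of the argument — is proving this rank-one freeness, i.e.\ that $e_U\, w\, e_V$ generates a copy of ${}^w\C[L]$ and that these pieces are linearly independent and exhaust $e_U\C[G]e_V$. Over a field this is the classical computation underlying \cite{Howlett-Lehrer_HC, Howlett-Kilmoyer80}, where one uses that $U\cap {}^wV$, $U\cap{}^wU$, etc., give a clean factorization of unipotent subgroups. Over a local ring the decomposition of $U$ relative to $w$ still works at the level of the lattice of unipotent root subgroups (indexed by pairs $i<j$), but one must check that the relevant products of these subgroups remain subgroups and that the "collision" terms $e_U(xw)e_V$ for $x$ in $U\cap {}^{w}U$ or similar collapse correctly onto $e_U w e_V$ — essentially an Iwahori-type factorization argument. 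I would set this up by writing, for each $w\in W$, the subgroup $U_w = U\cap w^{-1}Uw$ and its complement $U^w=U\cap w^{-1}Vw$, verifying $U=U_wU^w$ with the product map a bijection (true because these are "closed under addition" in the obvious coordinates even over a local ring), and then computing $e_U w e_V$ directly: the $U_w$-part is absorbed into $e_V$ after conjugation, the $U^w$-part is absorbed into $e_U$, leaving a single $\C[L]$-generator. Assembling these and checking linear independence across $w\in W$ (using that the double cosets $LwL$ partition the support appropriately — here one needs that distinct $w\in W$ give distinct contributions to $e_U\C[G]e_V$, which follows from tracking which double coset $LgL$ a given $w$-term lands in) completes the identification $\pres\pind\cong\bigoplus_{w\in W}w^*$.
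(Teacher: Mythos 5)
Your overall framing—compute $e_Ue_V\C[G]e_Ue_V$ as a $\C[L]$-bimodule, identify it with $\bigoplus_{w\in W}{}^w\C[L]$, then conclude by adjunction, after reducing to local $R$—matches the paper, but the core of your argument is essentially the field-case computation, and it breaks exactly at the point you flagged and then set aside. Over a non-field local ring the union $\bigcup_{w\in W} ULwLV$ is a \emph{proper} subset of $G$ (already for $\GL_2(\Z/p^2\Z)$), so your key claim $e_U\C[G]e_V=\bigoplus_w e_U\C[LwL]e_V$, or its two-sided analogue for $e_Ue_V\C[G]e_Ue_V$, is false as stated: the correct cells are $G_w=VwLUG_0$ with $G_0$ the congruence kernel (Proposition~\ref{lem:G}(e)), and the real work, which your sketch omits, is to show that for $g\in G_0$ the elements $e_Ue_V\, wlg\, e_Ue_V$ still collapse into $e_Ue_V w\,\C[L]\, e_Ue_V$. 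The paper does this in Proposition~\ref{lem:summands} via the factorization $G_0=V_0^wL_0U_0^w$ (Proposition~\ref{lem:G}(c)) and an explicit absorption computation; the step you describe (``absorb the $U_w$-part into one side and the $U^w$-part into the other'') only accounts for the residue-field directions. (A related slip: partway through you switch from the relevant bimodule $e_Ue_V\C[G]e_Ue_V$ to $e_U\C[G]e_V$; over a non-field ring these are genuinely different objects, and only the former computes $\pres\pind$.)

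The second gap is linear independence across $w$. Your proposed mechanism—tracking which $L$-double coset (or cell) each $w$-term is supported in—works over a field, where the supports of $e_Uwle_V$ lie in disjoint Bruhat cells, but fails over a general local ring: the supports of $e_Ue_Vwle_Ue_V$ for different $w$ overlap substantially, since each is smeared over many $G_0$-cosets. This is where the paper works hardest (Proposition~\ref{lem:lin-ind}): it uses the Hermitian inner product on $\C[G]$, the Halmos element $z$ with $z(e_Ue_V)^2=e_Ue_V$ (Lemma~\ref{lem:z}), the injectivity of $x\mapsto e_Vx$ on $e_{U^w}e_{V^w}\C[G]$ (Lemma~\ref{lem:Dat_wU}), and the disjointness statement $ULV\cap t^{-1}Ur=\emptyset$ for $\length(t)\leq\length(r)$, $t\neq r$ (Proposition~\ref{lem:G}(f)), which is itself proved by a BN-pair induction after reducing modulo $\mathfrak m$. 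Without a substitute for these two ingredients—the $G_0$-absorption for surjectivity and a genuine separation mechanism across $w$—your proposal does not yet prove the theorem.
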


When $\sigma=\chi$, we have the following more precise statement:

\begin{theorem}\label{thm:main2}
For each irreducible representation $\chi$ of $L$ one has $\End_G(\pind \chi) \cong \C[W_\chi]$ as algebras.
\end{theorem}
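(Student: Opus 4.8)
The plan is to upgrade the vector-space isomorphism of Theorem \ref{thm:main1} to an algebra isomorphism in the case $\sigma = \chi$. First I would reduce to $R$ local, using the decomposition $R \cong R_1 \times \cdots \times R_m$: all the functors $\pind$, $\pres$ and the groups $G$, $L$, $W$ factor as products over the local factors, so $\End_G(\pind\chi) \cong \bigotimes_i \End_{G(R_i)}(\pind\chi_i)$, and one checks $W_\chi = \prod_i W_{\chi_i}$, so $\C[W_\chi] \cong \bigotimes_i \C[W_{\chi_i}]$ compatibly; this is presumably Lemma \ref{lem:local}. So assume $R$ is local, with $W = S_n$ acting by permutation matrices.

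Next I would realise $\End_G(\pind\chi)$ concretely as a convolution algebra on a double-coset-type space. Writing $X$ for the space of $\chi$ and using two-sided adjunction (stated in the excerpt), $\End_G(\pind\chi) \cong \Hom_L(X, \pres\pind\chi) \cong \Hom_L\bigl(X, \bigoplus_{w \in W} w^*X\bigr) \cong \bigoplus_{w \in W_\chi} \Hom_L(X, w^*X)$; by irreducibility of $\chi$ and Schur's lemma each summand $\Hom_L(X, w^*X)$ is one-dimensional whenever $w \in W_\chi$. This recovers the dimension count and, more importantly, exhibits a distinguished basis $\{T_w\}_{w \in W_\chi}$ of $\End_G(\pind\chi)$, where $T_w$ is (a scalar multiple of) the intertwiner built from the isomorphism $w^*\chi \cong \chi$ together with the canonical map $X \to w^*X$ appearing in the natural isomorphism of Theorem \ref{thm:main1}. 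The crux is then to compute the product $T_w T_{w'}$ in this basis.

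The hard part will be showing $T_w T_{w'} = T_{ww'}$ (up to a possible nonzero scalar that can be normalised away), i.e.\ that the structure constants are \emph{the same} as in $\C[W_\chi]$, with no Iwahori--Hecke-type deformation and no $2$-cocycle twist. In the finite-field case this follows from the Howlett--Lehrer theory, where one shows the intertwining operators multiply without correction terms because the relevant "index" parameters are controlled; over a ring the analogous statement presumably comes from the structure of $\C[G]e_Ue_V$ as established in \cite{CMO1}. I would approach this by tracking the natural isomorphism $\pres\pind \cong \bigoplus_w w^*$ of Theorem \ref{thm:main1} carefully enough to see how composition of the associated maps $X \to w^*X \to w'^*w^*X = (ww')^*X$ behaves: the content is a Mackey-style decomposition of $\pres\pind\pres\pind$ and the verification that the obvious "multiplication" map $\bigl(\bigoplus_w w^*\bigr)\bigl(\bigoplus_{w'} w'^*\bigr) \to \bigoplus_{w''} w''^*$ is compatible with the associativity/composition in $\End_G(\pind\chi)$. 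Two points need care here: (i) that the summand indexed by a given pair $(w, w')$ maps isomorphically (not just nonzero-ly) onto the summand indexed by $ww'$, which is where the idempotent identity $e_Ue_V \cdot (\text{double coset contribution}) \cdot e_Ue_V$ must be shown to be nonzero — this is the one genuinely ring-theoretic input, likely reducing to a statement about $U$, $V$ and monomial matrices over a local ring; and (ii) ruling out a cocycle, which follows formally once one knows all structure constants are nonzero and the algebra has a basis indexed by a group with the group multiplication as support, since such an algebra with one-dimensional components is a twisted group algebra, and one then argues the twist is trivial — either by exhibiting explicit units (the normalised $T_w$ for $w$ in a generating set, checking braid relations) or by a dimension/semisimplicity argument showing $\C_\alpha[W_\chi] \cong \C[W_\chi]$ is forced. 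I expect step (i), the non-vanishing of the relevant double-coset intertwiners over a non-reduced local ring, to be the main obstacle; everything else is bookkeeping with adjunctions and Schur's lemma.
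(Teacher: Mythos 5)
There is a genuine gap at the step you yourself identify as the crux. Your plan is to show that the intertwiner basis $\{T_w\}_{w\in W_\chi}$ multiplies group-like, $T_wT_{w'}\in\C^\times T_{ww'}$, so that $\End_G(\pind\chi)$ is a twisted group algebra whose cocycle you then trivialise. This is false already in the base case that any argument must contain: for $R=\k$ a field the functor $\pind$ \emph{is} Harish-Chandra induction (as recalled in the paper's first example), so for $\chi=1_L$ and $n=2$ one has $\End_G(\pind 1_L)\cong\End_G(\C[G/LU])$, the Iwahori--Hecke algebra with parameter $q=|\k|$, in which the nontrivial intertwiner satisfies a quadratic relation of the form $T_s^2=(q-1)T_s+q\cdot 1$. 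The structure constants are therefore \emph{not} supported on the group multiplication of $W_\chi$, and no rescaling of the $T_w$ repairs this (only an affine, basis-changing substitution does, and producing it is exactly the nontrivial content of the abstract isomorphism between the Hecke algebra and $\C[W]$). The same phenomenon persists over a general local ring: the paper shows (Lemma \ref{lem:end_triv}) that reduction mod $\germ m$ identifies $\End_G(\pind 1_L)$ over $R$ with the algebra over the residue field, so the deformation is genuinely present. Your appeal to Howlett--Lehrer as a precedent for ``multiplication without correction terms'' misreads that theory: it produces a (possibly twisted) Hecke algebra with parameters, and for the principal series of $\GL_n$ the parameters are powers of $q$, not $1$. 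Relatedly, the isomorphism asserted in Theorem \ref{thm:main2} is non-canonical (the paper remarks on this), so no argument that pins the natural intertwiners to group elements can be correct.

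The paper's route avoids computing structure constants altogether. After the reduction to local $R$ (which you do have), it uses Lemma \ref{lem:iw} to sort $\chi$ into the form \eqref{eq:chi}; Lemma \ref{lem:pind-prime} to split $\End_{G_n}(\pind_n\chi)$ as a tensor product over the blocks where $\chi$ is isotypic, the key points being faithfulness of the block-parabolic functor $\pind'$ (injectivity of the induced algebra map) plus the dimension count from Theorem \ref{thm:main1} (surjectivity); Lemma \ref{lem:chi_scalar} to twist by $\chi_1\circ\det$ and reduce an isotypic $\chi_1^{n_j}$ to the trivial character; and finally Lemma \ref{lem:end_triv}, where the quotient $R\to\k$ induces a surjective algebra map from $e_Le_Ue_V\C[G(R)]e_Ue_Ve_L$ onto the corresponding algebra over $\k$, the target is $\C[W]$ by the classical Iwahori--Matsumoto/Tits result (abstract isomorphism of the Hecke algebra with the group algebra), and equality of dimensions (again Theorem \ref{thm:main1}) forces the map to be an isomorphism. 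If you want to salvage your approach, you would have to replace the twisted-group-algebra step by an argument of this deformation type; as written, the proposed multiplication law fails at the first nontrivial example.
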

 
 {Theorems \ref{thm:main1} and \ref{thm:main2} readily imply} the following combinatorial formula for the number of principal series representations. Following \cite{Andrews}, we let $\partition_k(n)$ denote the number of multipartitions of $n$ with $k$ parts: i.e., the number of   $k$-tuples $(\lambda^{(1)},\ldots,\lambda^{(k)})$, where each $\lambda^{(i)}$ is a partition of some non-negative integer $n_i$, and $\sum_i n_i=n$.

\begin{corollary}\label{cor:counting} 
If $R$ is isomorphic to a product $R_1\times\cdots\times R_m$ of finite local rings, and for each $j$ we set $k_j=|R_j^\times|$, then the principal series of $\GL_n(R)$ contains precisely $\prod_{j} \partition_{k_j}(n)$ distinct isomorphism classes of irreducible representations. 
\end{corollary}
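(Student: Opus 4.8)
The plan is to reduce immediately to the local case and then count orbits. By the product decomposition $G(R)\cong G(R_1)\times\cdots\times G(R_m)$ and the compatible decompositions of $L$, $U$, $V$, and $W$, the functor $\pind$ for $R$ is an external tensor product of the functors $\pind$ for the local factors $R_j$ (this compatibility is part of the setup, and is used implicitly in the Example following the definition of $\pind$; if needed it follows from the fact that $e_Ue_V=\bigboxtimes_j e_{U_j}e_{V_j}$ under the product decomposition of the group algebra). An irreducible representation of a direct product is an external tensor product of irreducibles of the factors, and a subrepresentation of an external tensor product is a tensor product of subrepresentations; hence the principal series of $\GL_n(R)$ is exactly the set of external tensor products of principal series representations of the $\GL_n(R_j)$, and the count is multiplicative. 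So it suffices to prove that when $R$ is local with $k=|R^\times|$, the principal series of $\GL_n(R)$ has exactly $\partition_k(n)$ isomorphism classes of irreducible constituents.

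For the local case, I would argue as follows. By Theorem \ref{thm:main1}, if $\chi,\sigma$ are irreducible representations of $L$ then $\pind\chi$ and $\pind\sigma$ have a common irreducible constituent if and only if $\Hom_G(\pind\chi,\pind\sigma)\neq 0$, i.e. if and only if $\sigma=w^*\chi$ for some $w\in W$; moreover by Theorem \ref{thm:main2} the endomorphism algebra $\End_G(\pind\chi)\cong\C[W_\chi]$ is a (split) semisimple commutative-or-not group algebra, and in particular the number of distinct irreducible constituents of $\pind\chi$ equals the number of irreducible representations of $W_\chi$, each occurring with multiplicity equal to its dimension. Summing over $W$-orbits of irreducible characters of $L$: the total number of distinct principal series constituents is $\sum_{[\chi]}\#\Irr(W_\chi)$, where $[\chi]$ runs over $W$-orbits and $W_\chi$ is the stabiliser (well-defined up to conjugacy on the orbit). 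Now $L\cong(R^\times)^n$ and $W\cong S_n$ acting by permuting the factors, so an irreducible character of $L$ is an $n$-tuple $(\psi_1,\dots,\psi_n)$ of characters of $R^\times$, i.e. a function $\{1,\dots,n\}\to\widehat{R^\times}$; since $|\widehat{R^\times}|=|R^\times|=k$, a $W$-orbit of such functions is the same as a function from the $k$-element set $\widehat{R^\times}$ to the set of partitions recording the multiplicities, i.e. a $k$-multipartition of $n$, and the stabiliser $W_\chi$ of a character with multiplicities $(n_1,\dots,n_k)$ is the Young subgroup $S_{n_1}\times\cdots\times S_{n_k}$. Finally $\#\Irr(S_{n_1}\times\cdots\times S_{n_k})=\prod_i p(n_i)$, the product of ordinary partition numbers, and summing $\prod_i p(n_i)$ over all $k$-multipartitions $(n_1,\dots,n_k)$ of $n$ is exactly the definition of $\partition_k(n)$ unwound one level — indeed a $k$-multipartition of $n$ in the sense of \cite{Andrews} is precisely a choice of $(n_1,\dots,n_k)$ with $\sum n_i=n$ together with a partition of each $n_i$. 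This yields $\partition_k(n)$ and completes the local case.

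The main obstacle is the orbit-counting bookkeeping: one must be careful that "distinct irreducible constituents" are counted once across the whole principal series even though each appears in $\pind\chi$ for a whole $W$-orbit's worth of $\chi$, and that Theorem \ref{thm:main1} is strong enough to guarantee that constituents of $\pind\chi$ and $\pind\chi'$ for $\chi,\chi'$ in \emph{different} orbits are genuinely disjoint (it is: $\Hom_G(\pind\chi,\pind\chi')=0$ forces disjointness since these are semisimple modules — semisimplicity of $\pind\chi$ follows from $\End_G(\pind\chi)$ being semisimple, or directly from $\C[G]e_Ue_V$ being a summand of $\C[G]$ as a left module, hence projective, but in fact over $\C$ every representation is semisimple so this is automatic). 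Once disjointness across orbits is in hand, the count is the sum over orbits of $\#\Irr(W_\chi)$ and the rest is the combinatorial identity above. Everything else is the standard dictionary between functions up to $S_n$-action and multipartitions, and the elementary fact that $\#\Irr$ of a finite group over $\C$ equals its number of conjugacy classes, specialised to Young subgroups of $S_n$.
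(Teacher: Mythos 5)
Your proposal is correct and follows essentially the same route as the paper: reduce to the local case via the product decomposition of $G$, $L$, $W$ and the bimodule defining $\pind$, use Theorem \ref{thm:main1} to sort principal series constituents by $W$-orbits of characters of $L$ (the paper phrases this via Lemma \ref{lem:iw} and a unique sorted representative $\chi_1^{n_1}\otimes\cdots\otimes\chi_k^{n_k}$), and use Theorem \ref{thm:main2} to count constituents of each $\pind\chi$ by $\#\Irr(W_\chi)=\#\Irr(S_{n_1}\times\cdots\times S_{n_k})$, summing to $\partition_k(n)$. Your explicit attention to disjointness across orbits and to semisimplicity is a slightly more careful rendering of what the paper leaves implicit, not a different argument.
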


\begin{remarks*}\quad 
\begin{enumerate}[$\square$]  
\item In the case where $R$ is a field, Theorems \ref{thm:main1} and \ref{thm:main2} are essentially due to Green \cite{Green55}; see~\cite{Steinberg51} for the case $\chi=1_L$, and see \cite{Springer_GLn} for an exposition. Both of these results have been  generalised to arbitrary Harish-Chandra series for arbitrary reductive groups: see \cite{Harish-Chandra} and \cite{Howlett-Lehrer_Hecke}, respectively. 
\item Theorems \ref{thm:main1} and \ref{thm:main2} can be extended, using \cite[Theorem 2.15(5)]{CMO1}, to the setting of smooth representations of the profinite groups $G(\mathcal O)$, where $\mathcal O$ is the ring of integers in a nonarchimedean local field. 
\end{enumerate}
\end{remarks*}

\section{Proofs} The first step in the proof of the main results is to reduce to the case of local rings.

\begin{lemma}\label{lem:local}
If Theorems \ref{thm:main1} and \ref{thm:main2} and Corollary \ref{cor:counting}  are true for all finite commutative local rings, then they are true for all finite commutative rings.
\end{lemma}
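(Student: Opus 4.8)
The plan is to exploit the product decomposition $R\cong R_1\times\cdots\times R_m$ together with the induced decompositions $G(R)\cong\prod_j G(R_j)$, $L(R)\cong\prod_j L(R_j)$, $U(R)\cong\prod_j U(R_j)$, $V(R)\cong\prod_j V(R_j)$, and $W(R)\cong\prod_j W(R_j)$, all of which are stated in the excerpt. First I would observe that the idempotents factor accordingly: under the isomorphism $\C[G(R)]\cong\bigotimes_j\C[G(R_j)]$, one has $e_{U(R)}=\bigotimes_j e_{U(R_j)}$ and $e_{V(R)}=\bigotimes_j e_{V(R_j)}$, since $|U(R)|=\prod_j|U(R_j)|$ and the group sum factors as a product of group sums. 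Consequently the bimodule $\C[G(R)]e_{U(R)}e_{V(R)}$ is the external tensor product $\bigotimes_j\bigl(\C[G(R_j)]e_{U(R_j)}e_{V(R_j)}\bigr)$, and the functor $\pind$ for $R$ is naturally isomorphic to the external tensor product of the functors $\pind$ for the $R_j$; the same holds for $\pres$. Under the standard equivalence $\Rep\bigl(\prod_j H_j\bigr)\simeq\prod_j\Rep(H_j)$ sending $\boxtimes_j Y_j$ to $(Y_1,\ldots,Y_m)$, every irreducible representation of $L(R)$ has the form $\chi=\boxtimes_j\chi_j$ with each $\chi_j$ irreducible, and likewise for $G(R)$.

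Next I would transport each of the three statements across this decomposition. For Theorem~\ref{thm:main1}: since $\pres\pind$ is the external tensor product of the $\pres_j\pind_j$, and each $\pres_j\pind_j\cong\bigoplus_{w_j\in W(R_j)}w_j^*$ by the local case, the natural isomorphism $\pres\pind\cong\bigoplus_{w\in W(R)}w^*$ follows by distributing tensor products over direct sums and using $W(R)\cong\prod_j W(R_j)$ together with $(\boxtimes_j w_j)^*(\boxtimes_j\chi_j)\cong\boxtimes_j(w_j^*\chi_j)$. The $\Hom$-dimension formula then follows formally, or alternatively from $\Hom_{G(R)}(\boxtimes_j A_j,\boxtimes_j B_j)\cong\bigotimes_j\Hom_{G(R_j)}(A_j,B_j)$ and multiplicativity of $\#\{w\in W\mid w^*\chi=\sigma\}$ over the factors. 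For Theorem~\ref{thm:main2}: $\End_{G(R)}(\pind\chi)\cong\bigotimes_j\End_{G(R_j)}(\pind_j\chi_j)\cong\bigotimes_j\C[W(R_j)_{\chi_j}]$, and since $w=\boxtimes_j w_j$ satisfies $w^*\chi\cong\chi$ iff $w_j^*\chi_j\cong\chi_j$ for every $j$, we get $W(R)_\chi\cong\prod_j W(R_j)_{\chi_j}$, hence $\bigotimes_j\C[W(R_j)_{\chi_j}]\cong\C[W(R)_\chi]$ as algebras. For Corollary~\ref{cor:counting}: a principal series irreducible of $G(R)$ is a summand of some $\pind\chi=\boxtimes_j\pind_j\chi_j$, hence of the form $\boxtimes_j\pi_j$ with $\pi_j$ in the principal series of $G(R_j)$; conversely any such external product is a summand of $\pind(\boxtimes_j\chi_j)$. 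So the count is the product of the counts for the $R_j$, and the stated formula $\prod_j\partition_{k_j}(n)$ follows from the local case.

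I do not expect a serious obstacle here; the lemma is genuinely a bookkeeping reduction. The one point that requires a little care — and which I would state as a small auxiliary observation rather than leave implicit — is the compatibility of the $W$-action with the external tensor product, i.e.\ that under $\C[G(R)]\cong\bigotimes_j\C[G(R_j)]$ the realisation of $W(R)$ as permutation matrices (or products thereof, as in the excerpt's convention) is exactly $\prod_j W(R_j)$, so that the identity $(\boxtimes_j w_j)^*(\boxtimes_j\chi_j)\cong\boxtimes_j(w_j^*\chi_j)$ holds on the nose and the naturality of the isomorphism in Theorem~\ref{thm:main1} is preserved. Everything else is an application of the elementary fact that $\Rep$ of a finite direct product of finite groups is the external tensor product of the factor categories, together with the compatibility of $\pind$, $\pres$, $\Hom$, $\End$, and "being an irreducible summand" with that structure.
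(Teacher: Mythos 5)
Your proposal is correct and follows essentially the same route as the paper: decompose $R$ into local factors, note that $G$, $L$, $U$, $V$, $W$, the idempotents, and the bimodules $\C[G]e_Ue_V$ all decompose as (tensor) products accordingly, and transport each statement across the resulting equivalence $\Rep(G(R))\simeq\prod_j\Rep(G(R_j))$. The paper's proof is simply a more condensed version of the same bookkeeping.
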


\begin{proof}
Let $R$ be a  finite commutative ring, and write $R$ as a product of local rings $R_1\times \cdots\times R_m$. All of the groups and the representation categories in Theorems \ref{thm:main1} and \ref{thm:main2} and in Corollary \ref{cor:counting}   then decompose into products accordingly: $G(R)\cong G(R_1)\times \cdots \times G(R_m)$, $\Rep(G(R))\cong \Rep(G(R_1))\times \cdots \times \Rep(G(R_m))$, and so on. The bimodule $\C[G(R)]e_{U(R)}e_{V(R)}$ decomposes as the tensor product of the bimodules $\C[G(R_j)]e_{U(R_j)}e_{V(R_j)}$, and likewise for $e_{U(R)}e_{V(R)}\C[G(R)]$, so the functors $\pind$ and $\pres$ are compatible with the above decompositions. By definition, the group $W$ also decomposes compatibly. Thus Theorems \ref{thm:main1} and \ref{thm:main2} and Corollary \ref{cor:counting}   over $R$ follow immediately from the corresponding results over the local factors~$R_j$. 
\end{proof}

\textbf{Assume from now on that  $\boldsymbol R$ is a local ring.} Let $\germ m$ denote the maximal ideal of $R$, and let $\k$ denote the residue field $R/\germ m$. Recall that $W\cong S_n$ is then the group of permutation matrices in $G$. We write $\length$ for the word-length function on $W$ with respect to  the standard generating set $S=\{(1\  2), \dots,(n-1\ n)\}$.

The following proposition collects the group-theoretical ingredients of the proof of Theorem~\ref{thm:main1}. 

\begin{proposition}\label{lem:G} \quad 
\begin{enumerate}[\rm(a)]
\item The multiplication map $U\times L\times V\to G$ is injective.
\item The reduction-mod-$\germ m$ map $G(R)\to G(\k)$ is surjective.
\item For each subgroup $H$ of $G$, let $H_0$ denote the intersection of $H$ with the kernel $G_0$ of the above reduction homomorphism. Then the multiplication map $U_0\times L_0\times V_0\to G_0$ is a bijection, and the same is true for any ordering of the three factors.
\item For each $w\in W$ the multiplication maps
\[
 (U\cap U^w)\times (U\cap V^w) \to U \qquad \text{and}\qquad (V\cap U^w)\times (V\cap V^w)\to V
\]
are bijections, where $U^w=w^{-1}Uw$, etc.
\item $G$ is the disjoint union $G=\bigsqcup_{w\in W} G_w$, where $G_w=V{wL}UG_0$.
\item For each $r,t\in W$ with $\length(t)\leq \length(r)$ and $t\neq r$ one has $ULV\cap t^{-1}Ur=\emptyset$. 
\end{enumerate}
\end{proposition}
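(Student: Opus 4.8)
The plan is to dispose of (a)--(e) by routine manipulations with triangular matrices and by reduction modulo $\germ m$, and to concentrate the effort on (f). For (a): if $u_1\ell_1v_1=u_2\ell_2v_2$ then $u_2^{-1}u_1\ell_1=\ell_2v_2v_1^{-1}$, and as the left side is upper triangular and the right side lower triangular, both are diagonal, which forces $u_1=u_2$ and then $\ell_1=\ell_2$, $v_1=v_2$. Part (b) is immediate: lift a matrix over $\k$ entrywise to $M_n(R)$; its determinant reduces to a unit of $\k$, hence lies outside $\germ m$, hence is a unit of the local ring $R$. For (c) I would first note that the argument of (a), suitably adapted and combined with the fact that $L$ normalises $U$ and $V$ (so that an ordering such as $L_0U_0V_0$ is rewritten as $U_0V_0L_0$ via $\ell u=(\ell u\ell^{-1})\ell$, and so on), shows that the multiplication map is injective for each of the six orderings of the factors $U_0,L_0,V_0$. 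Since $|U_0|=|V_0|=|\germ m|^{\binom{n}{2}}$, $|L_0|=|1+\germ m|^n=|\germ m|^n$, and $|G_0|=|I+M_n(\germ m)|=|\germ m|^{n^2}$, each of the six domains has cardinality $|G_0|$, and an injective map from such a domain into $G_0$ is automatically a bijection onto $G_0$.

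For (d): the map $(g_1,g_2)\mapsto g_1g_2$ is injective because $(U\cap U^w)\cap(U\cap V^w)\subseteq U^w\cap V^w=w^{-1}(U\cap V)w=\{1\}$, and likewise for the map out of $(V\cap U^w)\times(V\cap V^w)$. If $\sigma$ is the permutation with matrix $w$, a direct computation identifies $U\cap U^w$ with the set of matrices $I+M$ in which $M$ vanishes off the positions $(a,b)$ with $a<b$ and $\sigma(a)<\sigma(b)$, and $U\cap V^w$ with those in which $M$ vanishes off the positions $(a,b)$ with $a<b$ and $\sigma(a)>\sigma(b)$; since these two position sets partition $\{(a,b):a<b\}$, the domain $(U\cap U^w)\times(U\cap V^w)$ has cardinality $|R|^{\binom{n}{2}}=|U|$, and injectivity then forces bijectivity (the argument for $V$ is the same). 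For (e): reduction modulo $\germ m$ carries $G_w=VwLUG_0$ onto $V(\k)\,w\,L(\k)\,U(\k)$, and, conversely, given $g\in G(R)$ whose reduction lies in $V(\k)\,w\,L(\k)\,U(\k)$, one lifts a factorisation of its image — using the surjectivity of $V(R)\to V(\k)$, $L(R)\to L(\k)$ (which needs $R$ local) and $U(R)\to U(\k)$ — to see that $g$ differs from an element of $VwLU$ by an element of the normal subgroup $G_0$; hence $G_w$ is exactly the preimage of $V(\k)\,w\,L(\k)\,U(\k)$. Since $G(\k)=\bigsqcup_{w\in W}V(\k)\,w\,L(\k)\,U(\k)$ is a form of the Bruhat decomposition over the field $\k$, the sets $G_w$ partition $G(R)$.

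For (f) the plan is to produce, for each $(r,t)$ with $\length(t)\le\length(r)$ and $t\ne r$, a ``bottom-right corner minor'' that every element of $ULV$ makes a unit but that vanishes identically on $t^{-1}Ur$. For $g\in G$ and $1\le k\le n$ put $\Delta_k(g)=\det\bigl((g_{ij})_{n-k<i,j\le n}\bigr)$. If $g\in ULV$, write $g=bv$ with $b$ invertible upper triangular and $v\in V$; the bottom-right $k\times k$ block of $g$ is then the product of those of $b$ (upper triangular with unit diagonal) and $v$ (lower unitriangular), so $\Delta_k(g)\in R^\times$ for all $k$. On the other hand, if $\sigma,\tau$ are the permutations with matrices $r,t$, one computes $(t^{-1}u'r)_{ij}=u'_{\tau(i),\sigma(j)}$, so $\Delta_k(t^{-1}u'r)$ is, up to a sign depending only on $(r,t,k)$, the minor of the upper unitriangular matrix $u'$ on rows $A_k:=\tau(\{n-k+1,\dots,n\})$ and columns $B_k:=\sigma(\{n-k+1,\dots,n\})$. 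In the Leibniz expansion of that minor every term is $\pm\prod_{a\in A_k}u'_{a,\phi(a)}$ for some bijection $\phi\colon A_k\to B_k$, and the factor $u'_{a,\phi(a)}$ is identically $0$ whenever $a>\phi(a)$; so if \emph{no} bijection $\phi\colon A_k\to B_k$ satisfies $a\le\phi(a)$ for all $a$, the minor is the zero polynomial in the entries of $u'$, whence $\Delta_k\equiv0$ on $t^{-1}Ur$ and $ULV\cap t^{-1}Ur=\emptyset$.

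The step I expect to be the main obstacle is the combinatorial claim that $\length(t)\le\length(r)$ and $t\ne r$ do force the nonexistence of such a $\phi$ for some $k$. By Hall's marriage theorem, for fixed $k$ a bijection $\phi\colon A_k\to B_k$ with $a\le\phi(a)$ throughout exists iff $|B_k\cap\{1,\dots,j\}|\le|A_k\cap\{1,\dots,j\}|$ for all $j$. Writing $|A_k\cap\{1,\dots,j\}|=j-|\{a\le n-k:\tau(a)\le j\}|$ and similarly for $B_k$, this says precisely that the rank of the top-left $(n-k)\times j$ submatrix of the permutation matrix $r$ is at least that of $t$ — and letting $k,j$ vary, that this holds for all top-left submatrices. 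By the rank (``tableau'') criterion for the Bruhat order on $S_n$, this is equivalent to $r\le t$ in the Bruhat order (one verifies the precise identification using that the Bruhat order and $\length$ are invariant under inversion, which disposes of the residual ambiguity in relating permutation matrices to permutations). Since $r\le t$ in the Bruhat order implies $\length(r)\le\length(t)$, with equality only when $r=t$, the hypotheses $\length(t)\le\length(r)$ and $t\ne r$ are incompatible with the existence of a valid $\phi$ for every $k$, and (f) follows. Apart from this combinatorial input and the bookkeeping with the matrix-versus-permutation conventions, no idea beyond the standard combinatorics of $S_n$ is needed.
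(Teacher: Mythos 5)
Your proof is correct, and for parts (a)--(e) it simply fleshes out what the paper treats as routine: the paper dismisses (a)--(d) as ``well-known and easily verified'' and derives (e) from the Bruhat decomposition of $G(\k)$, exactly as you do. The genuine divergence is in (f). The paper first reduces to the residue field (observing that $ULV\cap t^{-1}Ur$ is empty whenever its image mod $\germ m$ is) and then runs an induction on $\length(t)$ using the BN-pair axiom $sBx\subseteq BsxB\sqcup BxB$ to establish $Bw_0B\cap t^{-1}Brw_0B=\emptyset$. You instead produce explicit separating invariants: the trailing corner minors $\Delta_k$, which are units on $ULV$ and, for a suitable $k$, identically zero on $t^{-1}Ur$; the existence of such a $k$ is extracted from Hall's marriage theorem together with Ehresmann's rank criterion for the Bruhat order, and the direction of the inequality does come out correctly --- a valid $\phi$ for every $k$ is equivalent to $r\le t$ in the Bruhat order, which together with $t\ne r$ forces $\length(r)<\length(t)$ and contradicts $\length(t)\le\length(r)$ (and, as you note, the residual matrix-versus-permutation ambiguity is harmless because length and Bruhat order are inversion-invariant). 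Your route works directly over the local ring $R$ with no reduction step and no BN-pair machinery, and it yields a concrete unit-versus-zero certificate for the disjointness; the cost is importing the (standard but not trivial) combinatorial characterisation of the Bruhat order, and the argument is specific to $\GL_n$, whereas the paper's induction transfers verbatim to any group with a split BN-pair. Both are complete proofs of the same statement.
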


\begin{proof}
Parts (a), (b), (c) and (d) are well-known and easily verified. Part (e) follows immediately from the Bruhat decomposition of $G(\k)$ \cite[(65.4)]{CR2}.

In part (f) we may assume without loss of generality that $R$ is a field, since  $ULV\cap t^{-1}Ur$ is empty if its reduction modulo $\germ m$ is empty. Let ${w_0}$ denote the longest element $(1,2,\ldots,n)\mapsto (n,\ldots,2,1)$ of $W$, and write $B$ for the upper-triangular subgroup $LU$ of $G$. We will show that under the stated assumptions on $t$ and $r$ one has
\begin{equation}\label{eq:bruhat_claim} 
B {w_0} B\cap t^{-1}Br {w_0} B=\emptyset.
\end{equation}
Since $ULV {w_0} =UL {w_0} U=B {w_0} B$, while $t^{-1}Ur {w_0} \subseteq t^{-1}Br {w_0} B$, we see that \eqref{eq:bruhat_claim} implies that $ULV\cap t^{-1}Ur=\emptyset$.

To prove \eqref{eq:bruhat_claim} we recall (from, e.g., \cite[(65.10)]{CR2}) that $(B,N, W, S)$ is a BN-pair in $G$ (note that we are assuming $R$ to be a field). This means, among other things, that $sBx\subseteq BsxB \sqcup BxB$ for all $s\in S$ and all $x\in W$; and that $BxB\cap ByB=\emptyset$ for all $x \neq y\in W$. The proof of \eqref{eq:bruhat_claim} is by induction on $\length(t)$. If $\length(t)=0$, so that $t=1$, then $B {w_0} B\cap Br {w_0} B=\emptyset$ unless $r=1$. For the inductive step, write $t=st_1$ with $s\in S$ and  $\length(t_1)=\length(t)-1$. We then have
\[
B {w_0} B \cap t^{-1}Br {w_0} B \subseteq \left( B {w_0} B\cap t_1^{-1}Bsr {w_0} B\right) \sqcup \left(B {w_0} B\cap t_1^{-1} Br {w_0}  B\right).
\]
We have $\length(t_1)=\length(t)-1\leq \length(r)-1\leq \length(sr)$, and $st_1\neq r\implies t_1\neq sr$, so the first term in the union is empty by induction. We have $\length(t_1) <\length(t)\leq \length(r)$, and in particular $t_1\neq r$, so the second term in the union is also empty. This proves \eqref{eq:bruhat_claim}.
\end{proof}

We equip $\C[G]$ with the Hermitian inner product $\langle\ |\ \rangle$ for which the group elements $g\in G$ constitute an orthonormal basis; and with the conjugate-linear involution $*$ defined on basis elements by $g^*=g^{-1}$. The two structures are related by the identity $\langle abc | d\rangle = \langle b | a^*{d}c^*\rangle$ for all $a,b,c,d\in \C[G]$. An element $a\in \C[G]$ is called self-adjoint if $a=a^*$.

\begin{lemma}\label{lem:z}
Let $A$ denote the unital subalgebra of $\C[G]$ generated by the idempotents $e_U$ and $e_V$. There is a self-adjoint, invertible element $z$ in the centre of $A$ such that $z(e_U e_V)^2=e_U e_V$ and $z(e_V e_U)^2 =  e_V e_U$.
\end{lemma}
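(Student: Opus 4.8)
The plan is to split $A$ into matrix blocks and to build $z$ out of the minimal central idempotents. First I would note that $(\C[G],*)$ is a finite-dimensional $C^{*}$-algebra: the identity $\langle x^{*}x\mid e\rangle=\langle x\mid x\rangle$ (with $e\in G$ the identity element) shows the involution is positive-definite. Hence $A$, being a $*$-subalgebra, is itself a finite-dimensional $C^{*}$-algebra, so $*$-isomorphic to a direct sum $\bigoplus_{j}M_{d_j}(\C)$ on which $*$ acts as conjugate-transpose in each block. Let $\epsilon_1,\dots,\epsilon_r\in A$ be the associated minimal central idempotents; these are self-adjoint. I will produce real numbers $\mu_j\neq0$ and take $z=\sum_j\mu_j^{-1}\epsilon_j$, which is then self-adjoint, invertible with inverse $\sum_j\mu_j\epsilon_j$, and central in $A$.

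The crux is to show that the corner $e_U A e_U$ is \emph{commutative}, being spanned by the powers of $a:=e_U e_V e_U$ (note $a=a^{*}$ and $a e_U=a$). Indeed, $A$ is spanned by words in $e_U$ and $e_V$; after the reductions $e_U^{2}=e_U$, $e_V^{2}=e_V$ such a word alternates, so one sandwiched between two copies of $e_U$ collapses to $e_U(e_V e_U)^{k}$ for some $k\ge0$, and an induction using $a e_U=a$ identifies this with $a^{k}$. It follows that in each block the corner $(e_U\epsilon_j)\,M_{d_j}(\C)\,(e_U\epsilon_j)=\epsilon_j\bigl(e_U A e_U\bigr)\epsilon_j$ is commutative; as the corner of a matrix algebra cut out by an idempotent is again a matrix algebra, of size equal to the rank of the idempotent, this forces $e_U\epsilon_j$ to have rank at most one. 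Consequently $a\epsilon_j$, which lies in that corner, equals $\nu_j\,e_U\epsilon_j$ for a scalar $\nu_j$, real because $a$ and $e_U\epsilon_j$ are self-adjoint. Now set $\mu_j:=\nu_j$ if $e_U e_V\epsilon_j\neq0$, and $\mu_j:=1$ otherwise. In the former case $\mu_j\neq0$: if $\nu_j=0$ then $a\epsilon_j=(e_V e_U)^{*}(e_V e_U)\epsilon_j=0$, and positivity of $*$ forces $e_V e_U\epsilon_j=0$, hence $e_U e_V\epsilon_j=0$, contrary to hypothesis.

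It remains to verify the two identities. Since $(e_U e_V)^{2}=a e_V$, a block with $e_U e_V\epsilon_j\neq0$ gives $(e_U e_V)^{2}\epsilon_j=(a\epsilon_j)e_V=\mu_j\,e_U e_V\epsilon_j$, while in a block with $e_U e_V\epsilon_j=0$ both sides of that equality vanish; so $(e_U e_V)^{2}\epsilon_j=\mu_j\,e_U e_V\epsilon_j$ for every $j$, and multiplying by $\mu_j^{-1}$ and summing over $j$ yields $z(e_U e_V)^{2}=e_U e_V$. Applying $*$ to this and using that $z$ is self-adjoint and central transforms it into $z(e_V e_U)^{2}=e_V e_U$.

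I expect the real difficulty to lie in the commutativity of $e_U A e_U$ and the ensuing rank bound on the $e_U\epsilon_j$ --- in effect a hands-on version of the structure theory for a pair of orthogonal projections, which one could instead deduce from the known description $A\cong\C^{a}\oplus M_2(\C)^{b}$ --- together with the bookkeeping for the degenerate blocks on which $e_U e_V$ vanishes, where the naive choice of scalar would be $0$.
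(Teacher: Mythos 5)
Your argument is correct, but it takes a different route from the paper: the paper's entire proof is a one-line reduction, observing that the statement holds for \emph{any} pair of orthogonal projections on a finite-dimensional Hilbert space and citing Halmos's two-subspaces theorem, whereas you reprove that structure result from scratch inside $\C[G]$. Your key steps all check out: $\C[G]$ with $g^*=g^{-1}$ is a finite-dimensional $C^*$-algebra (the pairing $\langle x^*x\mid e\rangle=\langle x\mid x\rangle$ gives faithfulness), so $A$ decomposes into matrix blocks with self-adjoint central idempotents; the corner $e_UAe_U$ is spanned by $e_U$ and the powers of $a=e_Ue_Ve_U$, hence commutative, which forces $e_U\epsilon_j$ to have rank at most one in each block; thus $a\epsilon_j=\nu_j e_U\epsilon_j$ with $\nu_j$ real, and positivity rules out $\nu_j=0$ on blocks where $e_Ue_V\epsilon_j\neq0$, so $z=\sum_j\mu_j^{-1}\epsilon_j$ does the job, with the degenerate blocks correctly assigned $\mu_j=1$; the identity $z(e_Ve_U)^2=e_Ve_U$ then follows by applying $*$ and centrality. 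What each approach buys: the paper's citation is shorter and makes plain that the lemma has nothing to do with $G$ or the specific subgroups $U,V$; yours is self-contained, exhibits $z$ explicitly as a function of the spectrum of $e_Ue_Ve_U$, and incidentally recovers the classical fact that the algebra generated by two projections has only $1\times1$ and $2\times2$ blocks. One small point worth making explicit: the lemma (and its later uses, e.g.\ in Proposition~3.8) needs $z$ invertible in $\C[G]$, not merely in $A$; since $A$ is a \emph{unital} subalgebra, $\sum_j\epsilon_j=1$ and your inverse $\sum_j\mu_j\epsilon_j$ lies in $A\subseteq\C[G]$, so this is fine, but it deserves a sentence.
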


\begin{proof} 
This is true for any pair of orthogonal projections on a finite-dimensional Hilbert space: see \cite[Theorem 2]{Halmos}, for example.
\end{proof}

\begin{remark*}
If $R$ is a field then   \cite[Theorem 2.4]{Howlett-Lehrer_HC} implies that there is a \emph{unique} element $z$ as in Lemma \ref{lem:z}. This is not the case over a general ring.
\end{remark*}

\begin{lemma} \label{lem:VUVw}
For each $w\in W$ we have $e_V e_{U^w} e_{V^w} = e_V e_U e_{V^w}$.
\end{lemma}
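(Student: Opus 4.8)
The plan is to deduce the identity from Proposition~\ref{lem:G}(d) together with two elementary facts about the idempotents $e_H$. First, if $H$ and $K$ are finite subgroups of a group and the multiplication map $H\times K\to HK$ is a bijection onto a subgroup $HK$, then $e_H e_K=e_{HK}$: indeed $|H|\,|K|\,e_H e_K=\sum_{h\in H,\,k\in K}hk$ lists each element of $HK$ exactly once, and $|H|\,|K|=|HK|$. Second, if $H\subseteq K$ then $e_H e_K=e_K=e_K e_H$. The idea is to rewrite both sides of the asserted identity, using these facts, as the single expression $e_V\, e_{U\cap U^w}\, e_{V^w}$.

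First I would record the form of Proposition~\ref{lem:G}(d) that is needed. Part~(d) applied with $w$ in place of $w^{-1}$ and then transported by the automorphism $x\mapsto w^{-1}xw$ (which carries $U$ to $U^w$, $U^{w^{-1}}$ to $U$, and $V^{w^{-1}}$ to $V$) shows that
\[
(U\cap U^w)\times(U^w\cap V)\to U^w
\]
is a bijection; applying the inversion map of $U^w$ to this factorisation gives the reversed bijection $(U^w\cap V)\times(U\cap U^w)\to U^w$. Part~(d) applied to $w$ directly gives the bijection $(U\cap U^w)\times(U\cap V^w)\to U$. By the first elementary fact, these translate into the idempotent identities $e_{U^w}=e_{U^w\cap V}\,e_{U\cap U^w}$ and $e_U=e_{U\cap U^w}\,e_{U\cap V^w}$.

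With these in hand the computation is immediate. For the left-hand side, $e_V\,e_{U^w}\,e_{V^w}=e_V\,e_{U^w\cap V}\,e_{U\cap U^w}\,e_{V^w}=e_V\,e_{U\cap U^w}\,e_{V^w}$, where the last step uses $U^w\cap V\subseteq V$ and the second elementary fact. For the right-hand side, $e_V\,e_U\,e_{V^w}=e_V\,e_{U\cap U^w}\,e_{U\cap V^w}\,e_{V^w}=e_V\,e_{U\cap U^w}\,e_{V^w}$, using $U\cap V^w\subseteq V^w$. This proves Lemma~\ref{lem:VUVw}. There is no real obstacle here; the only point requiring attention is to keep track of the order of the factors in the two decompositions of $U^w$ so that the absorbing subgroup $U^w\cap V$ sits to the immediate right of $e_V$, and to note that passing to the reversed factorisation of $U^w$ is legitimate because inversion is an anti-automorphism of $U^w$ preserving both $U\cap U^w$ and $U^w\cap V$.
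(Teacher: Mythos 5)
Your proof is correct and essentially coincides with the paper's: both rest on Proposition~\ref{lem:G}(d) (transported by conjugation/inversion to get the factorisation $U^w=(U^w\cap V)(U\cap U^w)$, respectively $U=(U\cap U^w)(U\cap V^w)$) together with the absorption identities $e_He_K=e_K=e_Ke_H$ for $H\subseteq K$. The only difference is cosmetic: you reduce both sides to the common expression $e_V e_{U\cap U^w} e_{V^w}$, whereas the paper inserts $e_{U^w}$ into $e_Ve_U$ and $e_U$ into $e_{U^w}e_{V^w}$ and then combines, which is the same computation arranged slightly differently.
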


\begin{proof}
It is clear that $e_V = e_V e_{(V\cap U^w)}$ and similarly that $e_U = e_{(U\cap U^w)} e_U$. Proposition~\ref{lem:G}(d) gives $e_{(V\cap U^w)}  e_{(U\cap U^w)}  = e_{U^w}$, and it follows that $e_V e_U = e_V e_{U^w} e_U$. The same reasoning gives $e_{ U^w} e_{V^w} = e_{U^w} e_U e_{V^w}$,
and so $e_V e_U e_{ V^w} = e_V e_{U^w} e_U e_{V^w} = e_V e_{U^w} e_{V^w}$.
\end{proof}

\begin{lemma}\label{lem:Dat_wU}
For each $w\in W$ the map 
\[ 
{\phi_w}:e_{U^w} e_{V^w} \C[G] \xrightarrow{ x\mapsto e_V x } e_V e_U \C[G] 
\]
is an isomorphism of $\C[L]$-$\C[G]$ bimodules.
\end{lemma}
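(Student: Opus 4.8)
The plan is to show that $\phi_w$ is a well-defined homomorphism of $\C[L]$-$\C[G]$-bimodules, prove that it is surjective using a single algebraic identity, and then conclude that it is an isomorphism by comparing dimensions, rather than by writing down an explicit inverse.

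First I would check that $\phi_w$ takes values in the asserted target. For $x=e_{U^w}e_{V^w}m$ in the source, Lemma~\ref{lem:VUVw} gives $e_V x=e_V e_{U^w}e_{V^w}m=e_V e_U e_{V^w}m\in e_V e_U\C[G]$. Left multiplication by $e_V$ is visibly a homomorphism of right $\C[G]$-modules, and it is a homomorphism of left $\C[L]$-modules because $e_V$ commutes with $\C[L]$.

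For surjectivity the key point is the identity
\[
e_V\,e_{U^w}e_{V^w}\,e_V e_U=(e_V e_U)^2 .
\]
To prove it, I would first rewrite $e_V e_{U^w}e_{V^w}$ as $e_V e_U e_{V^w}$ using Lemma~\ref{lem:VUVw}, then apply the factorisation $e_{V^w}=e_{V^w\cap U}\,e_{V^w\cap V}$ — a conjugate of the $V$-statement in Proposition~\ref{lem:G}(d) — together with the trivial absorptions $e_{V^w\cap V}e_V=e_V$ and $e_U e_{V^w\cap U}=e_U$. Granting the identity, Lemma~\ref{lem:z} gives $(e_V e_U)^2=z^{-1}e_V e_U$ with $z$ a central unit of the algebra $A$, so $(e_V e_U)^2\C[G]=e_V e_U\C[G]$. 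Since $e_{U^w}e_{V^w}e_V e_U\C[G]\subseteq e_{U^w}e_{V^w}\C[G]$, applying $\phi_w$ (i.e.\ left multiplication by $e_V$) to this right ideal already produces all of $e_V e_U\C[G]$; combined with well-definedness, $\phi_w$ is onto.

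It then remains to see that source and target have equal (necessarily finite) complex dimension. Since $e_{U^w}e_{V^w}=w^{-1}(e_U e_V)w$, the source equals $w^{-1}e_U e_V\C[G]$, which has the same dimension as $e_U e_V\C[G]$ (left multiplication by the unit $w^{-1}$ is a bijection of $\C[G]$). The conjugate-linear involution $*$ maps $e_U e_V\C[G]$ bijectively onto $\C[G]e_V e_U$ (as $(e_U e_V g)^*=g^* e_V e_U$); and, writing $p=z e_V e_U$ for the idempotent of Lemma~\ref{lem:z} — so that $\C[G]e_V e_U=\C[G]p$ and $e_V e_U\C[G]=p\C[G]$, because $z$ is a central unit of $A$ — both $\dim_\C \C[G]p$ and $\dim_\C p\C[G]$ equal $|G|$ times the coefficient of the identity in $p$. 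Hence $\dim_\C(\text{source})=\dim_\C e_U e_V\C[G]=\dim_\C \C[G]e_V e_U=\dim_\C e_V e_U\C[G]=\dim_\C(\text{target})$, and a surjection between finite-dimensional spaces of equal dimension is an isomorphism.

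The genuine content is the displayed identity — a short piece of unipotent root-subgroup bookkeeping of the same flavour as the proof of Lemma~\ref{lem:VUVw} — together with the equality $\dim_\C e_U e_V\C[G]=\dim_\C e_V e_U\C[G]$, for which Lemma~\ref{lem:z} is exactly what is needed. I would deliberately avoid trying to produce an explicit two-sided inverse: the natural candidate $y\mapsto e_{U^w}e_{V^w}e_V e_U\,z y$ lands in the source and is a bimodule map, but it fails to be a left inverse, since $z$ does not commute past $e_V$ and the element of Lemma~\ref{lem:z} attached to the pair $(U^w,V^w)$ need not equal $z$. The dimension count neatly sidesteps this.
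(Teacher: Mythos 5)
Your proof is correct, and it runs the paper's argument in the mirror-image direction. The paper first proves \emph{injectivity} of $\phi_w$, by exhibiting an explicit recovery map: using $V=(V\cap V^w)(V\cap U^w)$ one gets $e_{U^w}e_{V^w}e_Ve_{U^w}e_{V^w}=(e_{U^w}e_{V^w})^2$, so left multiplication by $z^w e_{U^w}e_{V^w}$ (with $z^w=w^{-1}zw$ from Lemma~\ref{lem:z}) recovers $x$ from $e_Vx$; it then concludes by a dimension count, observing directly that $e_Ve_U\C[G]=w_0w\,e_{U^w}e_{V^w}\C[G]$. You instead prove \emph{surjectivity}, via the dual identity $e_Ve_{U^w}e_{V^w}e_Ve_U=(e_Ve_U)^2$ (again Lemma~\ref{lem:VUVw} plus a conjugate of Proposition~\ref{lem:G}(d)) together with $(e_Ve_U)^2\C[G]=e_Ve_U\C[G]$ from Lemma~\ref{lem:z}, and then do the dimension count. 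Both routes use exactly the same ingredients (Lemma~\ref{lem:VUVw}, Proposition~\ref{lem:G}(d), Lemma~\ref{lem:z}, equality of dimensions), so neither is stronger; the paper's version has the small bonus of making the quasi-inverse $x\mapsto z^we_{U^w}e_{V^w}x$ explicit, which is reused in the proof of Proposition~\ref{lem:lin-ind}, while your surjectivity identity is arguably the more economical single computation. Your dimension comparison could also be shortened: since $w_0Uw_0^{-1}=V$ and $w_0Vw_0^{-1}=U$, one has $e_Ve_U\C[G]=w_0e_Ue_V\C[G]$, which gives $\dim e_Ue_V\C[G]=\dim e_Ve_U\C[G]$ without invoking the involution $*$ and the trace-of-an-idempotent argument (though that argument is perfectly valid).
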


\begin{proof}
 The following argument is taken from \cite[Lemme 2.9]{Dat_parahoric}.  The map {$\phi_w$} is well-defined, because 
\[
e_V e_{U^w} e_{V^w}\C[G] = e_V e_U e_{V^w} \C[G] \subseteq e_V e_U \C[G]
\]
by Lemma \ref{lem:VUVw}. The map $\phi_w$ is injective, because for each $f\in \C[G]$ we have 
\[
w^{-1} z w e_{U^w}e_{V^w}\left(e_V e_{U^w}e_{V^w}f\right) = z^w (e_{U^w}e_{V^w})^2 f =e_{U^w}e_{V^w} f
\]
where $z$ is as in Lemma \ref{lem:z}, and in the first equality we used that $V=(V\cap V^w)(V\cap U^w)$. The domain and target of $\phi_w$ are isomorphic as vector spaces: indeed, $e_V e_U\C[G]= w_0 w e_{U^w}e_{V^w}\C[G]$, where $w_0$ is the longest element of $W$. Since $\phi_w$ is injective it is thus also an isomorphism. 
\end{proof}

\noindent{For each subset $K\subseteq G$, we let $\C[K]$ denote the vector subspace of $\C[G]$ spanned by $K$.}

\begin{proposition}\label{lem:summands}
For each $w\in W$ the map 
\[
\Phi: \C[wL] \to e_U e_V \C[G_w] e_U e_V \qquad wl  \mapsto e_U e_V wl  e_U e_V 
\]
is an isomorphism of $\C[L]$-bimodules. 
\end{proposition}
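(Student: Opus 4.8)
The plan is to produce an explicit two-sided inverse to $\Phi$, built from the element $z$ of Lemma~\ref{lem:z}. First I would check that $\Phi$ is well-defined and $\C[L]$-bilinear: since $L$ normalises $U$ and $V$, and $G_w = VwLUG_0$ is a union of cosets of $L$ on each side (via $vwl'ug_0 = (vwl'uw^{-1})\cdot w \cdot l'^{-1}(l'ug_0)$ — more carefully, $wL\cdot L = wL$ and $L\cdot wL = wL$), the map $wl\mapsto e_Ue_V\,wl\,e_Ue_V$ lands in $e_Ue_V\C[G_w]e_Ue_V$ and commutes with left and right multiplication by $\C[L]$, because $e_Ue_V$ commutes with $\C[L]$. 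Both sides have the same dimension, namely $|L|$: the source is clear, and for the target one uses Proposition~\ref{lem:G}(e) together with the bijectivity statements of Proposition~\ref{lem:G}(c),(d) to see that the natural map $\C[wL]\to e_Ue_V\C[G_w]e_Ue_V$ is already surjective (every element of $G_w$ is of the form $vwlug_0$, and $e_U e_V\cdot vwlug_0\cdot e_Ue_V$ can be massaged, via $e_V v = e_V$, $ue_U = e_U$, and the factorisations of $U_0,V_0$, into a scalar multiple of $e_Ue_V\,wl\,e_Ue_V$). So it suffices to prove injectivity, equivalently that $\Phi$ admits a left inverse.

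The key computation is the following. Using Lemma~\ref{lem:VUVw} and the identity $e_V e_U = w_0 w\, e_{U^w} e_{V^w}\, (w_0 w)^{-1}$ type relations implicit in Lemma~\ref{lem:Dat_wU}, one shows that conjugating $e_Ue_V$ by $w$, together with inserting $w^{-1}zw$, "collapses" the product $e_U e_V\, wl\, e_U e_V$ back onto $wl$ times a controlled idempotent. Concretely, I expect the identity
\[
e_U e_V\, wl\, e_U e_V \;=\; wl\cdot w^{-1}\!\left(e_U e_V\right)\!w\cdot e_U e_V
\]
(using $L$-invariance to move $l$ out, and the fact that $w^{-1}(e_Ue_V)w = e_{U^w}e_{V^w}$), after which the injectivity argument of Lemma~\ref{lem:Dat_wU} — left-multiplying by $w^{-1}zw\,e_{U^w}e_{V^w}$ and using $z(e_{U^w}e_{V^w})^2 = e_{U^w}e_{V^w}$ — recovers $wl$ up to the left action of an idempotent that is invertible on the relevant space. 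Feeding this through gives a $\C[L]$-bilinear left inverse $\Psi$ to $\Phi$, defined by $\Psi(e_Ue_V\,wl\,e_Ue_V) = w(w^{-1}zw)\cdots$, i.e. a suitable twist of $x\mapsto (\text{unit})\cdot x$ followed by the harmless projection $\C[G]\onto \C[wL]$ extracting the $wL$-component.

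The main obstacle will be the bookkeeping in the surjectivity/dimension step: one must verify that an arbitrary element $vwlug_0\in G_w$, once sandwiched between $e_Ue_V$ on both sides, really does reduce to a multiple of $e_Ue_V\,wl\,e_Ue_V$ with no loss of information, and in particular that distinct cosets $wL$ and $w'L$ (same $w$, different $l$) do not get identified — this is where disjointness of the $G_w$ (Proposition~\ref{lem:G}(e)) and injectivity of $U\times L\times V\to G$ (Proposition~\ref{lem:G}(a)) are essential, since they guarantee that the "$wl$-coordinate" of an element of $e_Ue_V\C[G_w]e_Ue_V$ is well-defined. Once that is in hand, matching dimensions closes the argument, and I would present the explicit formula for $\Psi$ mainly as a sanity check rather than relying on it. The element $z$ enters exactly as in Lemma~\ref{lem:Dat_wU}, so no new input beyond Lemmas~\ref{lem:z}–\ref{lem:Dat_wU} and Proposition~\ref{lem:G} should be required.
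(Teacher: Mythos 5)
Your overall architecture — surjectivity by massaging $e_Ue_V\,vwlug_0\,e_Ue_V$, injectivity via Lemma~\ref{lem:Dat_wU} and disjointness of the cosets $UlV$ — is the same as the paper's, but the surjectivity step, as you state it, is wrong and hides the real content of the proposition. After absorbing $v$ and $u$ (for which you must first rewrite $G_w=VwLUG_0$ as $VwLG_0U$ using normality of $G_0$, so that $u$ actually sits next to $e_U$), you are left with $e_Ue_V\,wlg\,e_Ue_V$ with $g\in G_0$, and this is \emph{not} a scalar multiple of $e_Ue_V\,wl\,e_Ue_V$: the $G_0$-part leaves a nontrivial residue in $L_0$. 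Already for $w=1$, writing $g=\alpha\beta\gamma\in V_0L_0U_0$ gives $e_Ue_V\,lg\,e_Ue_V=e_Ue_V\,l\beta\,e_Ue_V$, a \emph{different} basis element whenever $\beta\neq 1$. For general $w$ the paper's computation decomposes $e_U=e_{U\cap V^w}e_{U\cap U^w}$, writes $gx\in V^wL_0U_0^w$ for each $x\in U\cap V^w$, absorbs the $V^w$- and $U_0^w$-factors into $e_V$ and $e_{U^w}$, and ends with $\Phi(wh)$ where $h=|U\cap V^w|^{-1}\sum_{x}l\beta(x)\in\C[L]$ is an average over the coset $lL_0$; that averaging argument is the heart of the proof and is missing from your plan — "a scalar multiple \dots with no loss of information" is exactly what fails. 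Relatedly, your claim that the target has dimension $|L|$ is circular at that stage (it already presupposes injectivity), though harmless since you intend to prove injectivity separately.

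On injectivity, the idea is right (reduce via Lemma~\ref{lem:Dat_wU} to the disjointness of the cosets $UlV$, Proposition~\ref{lem:G}(a)), but two details need repair. First, in your formula $\Phi(wl)=wl\,e_{U^w}e_{V^w}e_Ue_V$ the idempotents sit to the \emph{right} of $wl$, so "left-multiplying by $w^{-1}zw\,e_{U^w}e_{V^w}$" cannot reach them without conjugating past $wl$ (which turns $e_{U^w}e_{V^w}$ into $e_{U^{w^2}}e_{V^{w^2}}$); you need either a right-handed ($*$-dual) version of Lemma~\ref{lem:Dat_wU}, or the paper's arrangement $\Phi(wh)=e_Ue_V\cdot(e_{U^{w^{-1}}}e_{V^{w^{-1}}}wh)$, where left cancellation of $e_V$ and then of $e_U$ is exactly what the lemma provides. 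Second, the "$wl$-coordinate" of an element of $e_Ue_V\C[G_w]e_Ue_V$ is \emph{not} well-defined merely by disjointness of supports: the supports of $\Phi(wl)$ and $\Phi(wl\beta)$ with $\beta\in L_0$ overlap (see the $w=1$ example above), so Proposition~\ref{lem:G}(a),(e) alone cannot extract it; the $z$-cancellation of Lemma~\ref{lem:Dat_wU} is genuinely needed, and Proposition~\ref{lem:G}(a) enters only at the final step, to see that $wh\mapsto we_Uhe_V$ is injective on $\C[wL]$.
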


\begin{proof}
$\Phi$ is clearly a bimodule map. Let us show that it is injective. For $h\in \C[L]$ we have 
\[
\Phi(wh) = e_U e_V e_{U^{w^{-1}}} e_{V^{w^{-1}}} wh.
\]
The maps 
\[ 
e_{U^{w^{-1}}} e_{V^{w^{-1}}} \C[G] \xrightarrow{ x\mapsto e_V x } e_V e_U \C[G] 
\]
and
\[
e_V e_U \C[G] \xrightarrow{x\mapsto e_U x} e_U e_V \C[G]
\]
are isomorphisms by Lemma \ref{lem:Dat_wU}, so we are left to prove that {the map}
\[
wh \mapsto e_{ U^{w^{-1}}} e_{ V^{w^{-1}}} wh = w e_U h e_V 
\]
is injective on $\C[wL]$. It is, because Proposition~\ref{lem:G}(a) implies that the cosets $Ul V$ are all disjoint as $l$ ranges over $L$. Thus $\Phi$ is injective.

To prove that $\Phi$ is surjective, first note that $G_w = V w L G_0 U$ because $G_0$ is normal in $G$. Since $e_V v=e_V$ and $ue_U=e_U$ for all $v\in V$ and $u\in U$, we find that $e_U e_V \C[G_w] e_U e_V$ is spanned by elements of the form $e_U e_V wlg e_U e_V$, where $l\in L$ and $g\in G_0$. We will show that each element of this form is in the image of $\Phi$. 

For each $x\in V^w$ we have 
\[
gx=x(x^{-1}gx) \in V^w G_0 = V^w (V_0^w L_0 U_0^w) = V^w L_0 U_0^w 
\]
by Proposition~\ref{lem:G}(c). Let $\alpha:V^w\to V^w$, $\beta:V^w\to L_0$ and $\gamma:V^w\to U_0^w$ be the (unique) functions satisfying $gx = \alpha(x)\beta(x)\gamma(x)$ for all $x\in V^w$. Writing $e_U=e_{U\cap V^w} e_{U\cap U^w}$ and $e_V=e_{V\cap U^w}e_{V\cap V^w}$, we then have 
\[
\begin{aligned}
e_V wlg e_U e_V & = e_V wlg e_{U\cap V^w} e_{U\cap U^w} e_{V\cap U^w} e_{V\cap V^w} \\
& = e_V wlg\left(|U\cap V^w|^{-1} \sum_{x\in U\cap V^w} x\right) e_{U^w}e_{V\cap V^w} \\
& = |U\cap V^w|^{-1} \sum_{x \in U\cap V^w} e_V wl\alpha(x)\beta(x)\gamma(x)e_{U^w} e_{V\cap V^w}.
\end{aligned}
\]
Since $\gamma(x)\in U^w$ we have $\gamma(x)e_{U^w}=e_{U^w}$ {for each $x\in U\cap V^w$}. Since $\alpha(x)\in V^w$ we have $wl\alpha(x)l^{-1}w^{-1}\in V$, and consequently $e_V wl\alpha(x)=e_Vwl$ {for each $x$}. Continuing the computation with the space-saving notation $h=|U\cap V^w|^{-1}\sum_{x\in U\cap V^w} l\beta(x)\in \C[L]$, we find that
\[
\begin{aligned}
e_V wlg e_U e_V & = e_V w  h e_{U^w} e_{V\cap V^w} = e_V e_{U^{w^{-1}} \cap V} w h e_{U^w}  e_{V\cap V^w} \\ 
& = e_V w h e_{U\cap V^w} e_{U\cap U^w} e_{V\cap U^w} e_{V\cap V^w} = e_V wh e_U e_V,
\end{aligned}
\]
and so $e_U e_V wlv e_U e_V = \Phi(wh)$.
\end{proof}

\begin{proposition}\label{lem:lin-ind}
The set $\{ e_U e_V wl e_U e_V\in \C[G]\ |\ w\in W,\ l\in L\}$ is linearly independent.
\end{proposition}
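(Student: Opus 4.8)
I would deduce this from Proposition~\ref{lem:summands} together with the disjointness statement in Proposition~\ref{lem:G}(e). The point is that Proposition~\ref{lem:summands} already tells us that for a \emph{fixed} $w$, the set $\{e_U e_V wl e_U e_V \mid l\in L\}$ spans the space $e_U e_V \C[G_w] e_U e_V$ and is in bijection (via the isomorphism $\Phi$) with the basis $\{wl\mid l\in L\}$ of $\C[wL]$, hence is linearly independent. So the only thing left to rule out is a linear relation mixing different values of $w$. Since $G=\bigsqcup_{w\in W} G_w$ by Proposition~\ref{lem:G}(e), the subspaces $\C[G_w]$ are the coordinate subspaces of $\C[G]=\bigoplus_{w\in W}\C[G_w]$ with respect to the standard basis. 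If I can show that $e_U e_V wl e_U e_V \in \C[G_w]$, then a relation $\sum_{w,l} c_{w,l}\, e_U e_V wl e_U e_V = 0$ decomposes into separate relations inside each $\C[G_w]$, and each of those forces $c_{w,l}=0$ for all $l$ by the fixed-$w$ case.

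**Key steps, in order.** First, observe that $e_U e_V wl e_U e_V$ is a linear combination of elements $u_1 v_1 w l u_2 v_2$ with $u_i\in U$, $v_i\in V$, $l\in L$; since $V w L U \subseteq V w L U G_0 = G_w$ and $G_w$ is visibly closed under left multiplication by $U$ (as $U\subseteq G_w$ via $1\in W$... more carefully: $u_1 v_1 w l u_2 v_2$; I want this in $G_w = VwLUG_0$). I should check directly that $U\cdot G_w = G_w$ and $G_w\cdot V = G_w$: indeed $G_w = VwLUG_0$, and pushing a $U$ on the left past $V$ changes nothing structural since... here I would instead simply note $G_w$ is a union of $(U,V)$-double... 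Actually the cleanest route: $e_U e_V wl e_U e_V$ lies in $\C[U\cdot VwL\cdot U V]$, and $U V w L U V \subseteq G_w$ because modulo $G_0$ this is the $\k$-Bruhat cell $BwB$ (using $UVwLUV$ reduces to $B w B$ over $\k$ since $V$ and $U$ together generate... no). Let me instead use that $G_w = G_0 V w L U$ equivalently (Proposition~\ref{lem:G}(c) makes $G_0$ central-enough to move), and that reduction mod $\germ m$ sends $U V w L U V$ into $\ol U\,\ol V\, w\, \ol L\,\ol U\,\ol V$; I then want $\ol U\,\ol V\subseteq \ol B\, \ol w_0\,\ol B$-type containment... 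This is getting complicated, so let me reorganize.

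**Cleaner approach to the containment.** The statement $G=\bigsqcup_w G_w$ with $G_w = VwLUG_0$ came from the Bruhat decomposition of $G(\k)$. Reduction mod $\germ m$ maps $e_U e_V wl e_U e_V$ into $\C[\ol G]$, and each group element appearing in $e_U e_V wl e_U e_V$ maps into $\ol V\,\ol w\,\ol L\,\ol U\cdot(\text{stuff in }\ol G)$. The honest claim I need is: every element of $U V w L U V$ lies in $G_w$. Over $\k$ this is: $\ol U\,\ol V\, \ol w\, \ol L\,\ol U\,\ol V\subseteq \ol V\,\ol w\,\ol L\,\ol U$. But $\ol U\,\ol V$ is not contained in a single Bruhat cell, so this is false as stated — which means I have the wrong target subspace and should instead group the basis elements differently. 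The right statement is surely that $e_U e_V wl e_U e_V$, after expanding, lands in $\C[G_w]$ \emph{because} the relevant products collapse: e.g. $e_V w l e_U$ is supported on $VwLU$ only after absorbing, but the outer $e_U$ on the left and $e_V$ on the right push it outside. So the correct statement to prove is that $e_U e_V wl e_U e_V$ is supported on the single double coset $U V \backslash G / U V$ containing $w$ — no. I think the genuinely correct and intended argument is: apply $\Phi^{-1}$-type reasoning globally. Define the linear map $\C[G]\to \bigoplus_w \C[wL]$ that is the identity-ish projection; rather, use that $e_U e_V \C[G_w] e_U e_V$ for distinct $w$ are linearly independent subspaces of $\C[G]$ because $e_U e_V \C[G] e_U e_V = \bigoplus_w e_U e_V \C[G_w] e_U e_V$ follows from $\C[G]=\bigoplus_w\C[G_w]$ by applying the linear projection $a\mapsto e_U e_V a e_U e_V$ — this projection is the identity on its image and respects the direct sum decomposition of $\C[G]$ into the $\C[G_w]$, since $e_U, e_V$ are supported on $G_0\cdot(\cdots)$... this requires $U,V\subseteq$ the "block-diagonal" part, i.e. $U G_w \subseteq \bigcup G_{w'}$ trivially (it's all of $G$) but we need it to land in $G_w$ specifically, and left multiplication by $U\subseteq G_1\cdot G_0$ need not preserve $G_w$.

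**Revised plan — the clean version.** Given the difficulty above, the argument surely goes: by Proposition~\ref{lem:summands}, $\dim\big(e_U e_V\C[G_w]e_U e_V\big) = |L|$ for every $w$, and these spaces are pairwise linearly independent inside $\C[G]$; the latter because $\C[G]=\bigoplus_{w\in W}\C[G_w]$ (Proposition~\ref{lem:G}(e)) and each $e_U e_V\C[G_w]e_U e_V$ does sit inside $\C[G_w]$ once one checks the key containment $UVwLUVG_0 \subseteq G_w$, equivalently (mod $\germ m$, using Prop.~\ref{lem:G}(c) to handle $G_0$) that the $\k$-points satisfy $\ol U\,\ol V\,\ol w\,\ol L\,\ol U\,\ol V \subseteq \ol V\,\ol w\,\ol L\,\ol U\,\ol G_0 = \ol V\,\ol w\,\ol L\,\ol U$ — and here one uses that $\ol V\,\ol w\,\ol L\,\ol U = \ol V\,\ol w\,\ol B$ is left $\ol V$-stable and right $\ol B\supseteq\ol U$-stable, while the leftmost $\ol U$ and rightmost $\ol V$ get absorbed because $e_U e_V wl e_U e_V = e_U\cdot(e_V wl e_U)\cdot e_V$ and $e_V wl e_U\in\C[VwLU]$ by Prop.~\ref{lem:G}(a), after which multiplying by $e_U$ on the left and $e_V$ on the right keeps support in $U\cdot VwLU\cdot V$; and finally $U\cdot VwLU\cdot V\subseteq G_w$ needs checking. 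I expect the actual paper proves this last containment by a short direct computation or by citing that $e_U e_V wl e_U e_V$ already visibly lies in $e_U e_V\C[G]e_U e_V\cap\C[G_w]$ via Lemma~\ref{lem:VUVw}-style manipulations (rewriting $e_V e_U$ using $e_{U^w}$). So concretely: \textbf{Step 1:} show $e_U e_V wl e_U e_V\in\C[G_w]$ (the main obstacle — likely via the identities in Lemmas~\ref{lem:VUVw} and~\ref{lem:Dat_wU} rewriting the expression so its support in $G_w$ is manifest, or a direct Bruhat-cell computation over $\k$). \textbf{Step 2:} invoke $\C[G]=\bigoplus_w\C[G_w]$ to split any linear relation among the $e_U e_V wl e_U e_V$ into one relation per $w$. \textbf{Step 3:} within fixed $w$, apply Proposition~\ref{lem:summands}: since $\Phi$ is an isomorphism and $\{wl\}_{l\in L}$ is linearly independent in $\C[wL]$, its image $\{e_U e_V wl e_U e_V\}_{l\in L}$ is linearly independent, so all coefficients vanish. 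This completes the proof. The hard part is unquestionably Step~1 — establishing that the triple product $e_U e_V wl e_U e_V$ really is supported inside the single Bruhat-type piece $G_w$ and does not spill into cells $G_{w'}$ with $w'\ne w$.
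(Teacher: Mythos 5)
Your Step 1 is not merely the hard part of your plan --- it is false, and with it the whole support-decomposition strategy collapses. Every group element occurring in $e_U e_V w l e_U e_V$ does so with a strictly positive coefficient, so no cancellation is possible and the support of this element is exactly the set $UVwlUV$. That set is not contained in $G_w$: already for $n=2$, $w=l=1$, over a field, $\left(\begin{smallmatrix}1&x\\0&1\end{smallmatrix}\right)\left(\begin{smallmatrix}1&0\\y&1\end{smallmatrix}\right)=\left(\begin{smallmatrix}1+xy&x\\y&1\end{smallmatrix}\right)$ has vanishing $(1,1)$ entry when $xy=-1$, hence lies in the cell of the nontrivial permutation rather than in $G_1=VLU$. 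You in fact noticed exactly this obstruction (``$\ol U\,\ol V$ is not contained in a single Bruhat cell'') but then reinstated the same containment in your revised plan; no rewriting via Lemma \ref{lem:VUVw} or Lemma \ref{lem:Dat_wU} can change the support of a fixed element of $\C[G]$. Consequently the subspaces $e_Ue_V\C[G_w]e_Ue_V$ do \emph{not} sit inside the coordinate subspaces $\C[G_w]$, a relation $\sum_w e_Ue_V w h_w e_Ue_V=0$ does not split cell by cell, and your Steps 1--2 cannot be carried out. (Your Step 3, the fixed-$w$ case via Proposition \ref{lem:summands}, is fine, but it is also where the paper starts; the entire content of Proposition \ref{lem:lin-ind} is the independence across different $w$, which needs an input beyond parts (a)--(e) of Proposition \ref{lem:G}.)

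The paper's actual mechanism is quite different and is worth comparing against. Assuming a nontrivial relation $\sum_w e_Ue_V wh_we_Ue_V=0$, one picks $t\in W$ of \emph{minimal length} with $h_t\neq 0$ and pairs the relation, in the Hermitian inner product on $\C[G]$, against $\zeta^2 e_{U^{t^{-1}}}e_Ue_V th_te_Ue_V$, where $\zeta$ is a conjugate of the element $z$ of Lemma \ref{lem:z}. Using Lemma \ref{lem:VUVw}, Proposition \ref{lem:G}(d), and crucially Proposition \ref{lem:G}(f) --- the disjointness $ULV\cap t^{-1}Ur=\emptyset$ for $\length(t)\leq\length(r)$, $t\neq r$, which your proposal never invokes and which is exactly the substitute for the false containment --- all cross terms with $r\neq t$ vanish, so the pairing equals the squared norm of $\zeta e_{U^{t^{-1}}}e_Ue_V th_te_Ue_V$, which is therefore zero. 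Invertibility of $\zeta$ together with the injectivity statement of Lemma \ref{lem:Dat_wU} then gives $e_Ue_Vth_te_Ue_V=0$, and Proposition \ref{lem:summands} forces $h_t=0$, a contradiction. To repair your outline you would have to replace Steps 1--2 by an argument of this kind (some device that separates the different $w$-pieces \emph{after} multiplication by $e_Ue_V$ on both sides); as written, the proposal does not prove the proposition.
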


\begin{proof} 
We know from Proposition~\ref{lem:summands} that for each $w\in W$ the set $\{e_U e_V wl e_U e_V\ |\ l\in L\}$ is linearly independent. We must show that for different choices of $w$ these sets are independent from one another. 

Suppose we had elements $h_w\in \C[L]$, not all zero, with $\sum_{w\in W} e_U e_V wh_w e_U e_V =0$. Let $t\in W$ be an element of minimal length such that $h_t$ is nonzero. To compactify the notation we shall write $y =t^{-1}$. 

Let $z$ be as in Lemma \ref{lem:z}, and write $\zeta = y^{-1}z y$. Thus $\zeta$ is a  self-adjoint, invertible element of $\C[G]$ which commutes with $e_{U^{y}}$ and $e_{V^{y}}$ and which satisfies  $\zeta (e_{U^{y}} e_{V^{y}})^2= e_{U^{y}} e_{V^{y}}$. For each $r\in W$ with $r\neq t$ such that $h_r \ne 0$ we have
\[
\begin{aligned}
& \left\langle\left. \zeta^2 e_{U^{y}} (e_U e_V t h_t e_U e_V) \right| e_U e_V r h_r e_U e_V \right\rangle 
 =\left\langle\left. \zeta^2  e_{U^{y}} e_U e_V e_{U^{y}} e_{V^{y}} th_t \right| r h_r e_{U^r}e_{V^r}e_U e_V\right\rangle  \\
& =\left\langle\left. \zeta^2 e_{U^{y}} e_U e_V  e_{U^{y}} e_{V^{y}} t h_t  e_V e_U e_{V^r} e_{U^r} h_r^* \right| r \right\rangle 
 =\left\langle\left. \zeta^2 e_{U^{y}} e_U e_V  e_{U^{y}} e_{V^{y}} e_{U^{y}} e_{V^{ry}} e_{U^{ry}} t h_t h_r^*  \right| r \right\rangle \\
& =\left\langle\left. \zeta^2 e_{U^{y}} e_U e_{V^{y}} e_{U^{y}} e_{V^{y}} e_{U^{y}}e_{V^{y}} e_{U^{ry}} t h_t h_r^*  \right|r \right\rangle  
=\left\langle\left. \zeta^2 (e_{U^{y}} e_{V^{y}})^3 e_{U^{ry}} t h_t h_r^* \right| r\right\rangle \\
& =\left\langle\left. e_{U^{y}} e_{V^{y}} e_{U^{ry}} t h_t h_r^* \right| r\right\rangle  
  =\left\langle\left. t e_{U} h_t h_r^* e_V e_{U^r} \right| r\right\rangle   =\left\langle\left. e_U h_t h_r^* e_V \right| t^{-1} e_U r\right\rangle = 0.
\end{aligned}
\]
Here we have repeatedly used the equality $\langle abc| d\rangle = \langle b| a^*dc^*\rangle$; in the fourth step we used Lemma~\ref{lem:VUVw} to replace $e_Ue_V e_{U^y}$ with $e_U e_{V^y} e_{U^y}$ and to replace $e_{U^y}e_{V^{ry}} e_{U^{ry}}$ with $e_{U^y} e_{V^y} e_{U^{ry}}$; in the fifth step we used Proposition~\ref{lem:G}(d) to write $e_{U^y} e_U e_{V^y} = e_{U^y} e_{V^y}$; and in the final equality we used Proposition~\ref{lem:G}(f), which applies because of the minimality of $\length(t)$, and which implies that the functions $e_U h_t h_r^* e_V$ and $t^{-1} e_U r$ are supported on disjoint subsets of $G$ and are therefore orthogonal.

It follows from this that 
\[
\begin{aligned}
0 &=\Big\langle \zeta^2 e_{U^y} e_U e_V th_t e_U e_V \mid \sum_{w\in W} e_U e_V w h_w e_U e_V \Big\rangle \\
&= \Big\langle \zeta^2 e_{U^y} e_U e_V th_t e_U e_V \mid e_U e_V t h_t e_U e_V \Big\rangle \\ &= \Big\langle \zeta e_{U^y} e_U e_V th_t e_U e_V \mid \zeta e_{U^y} e_U e_V th_t e_U e_V\Big\rangle, 
\end{aligned}
\]
where the last equality holds because $\zeta$ is self-adjoint, $e_{U^y}$ is a self-adjoint idempotent, and $\zeta$ and $e_{U^y}$ commute. Thus $\zeta e_{U^y}e_U e_V th_t e_U e_V=0$. Since $\zeta$ is invertible, and left multiplication by $e_{U^y}$ is injective on $e_U e_V\C[G]$ (Lemma \ref{lem:Dat_wU}), we conclude that $e_U e_V th_t e_U e_V=0$. By Proposition~\ref{lem:summands} this implies that $h_t=0$, contradicting our choice of $t$ and completing the proof of the proposition.
\end{proof} 

\begin{proof}[Proof of Theorem \ref{thm:main1}]
The functor $\pres\pind$ is naturally isomorphic to the functor of tensor product (over $\C[L]$) with the $\C[L]$-bimodule $e_U e_V \C[G]e_U e_V$, while the functor $\bigoplus_{w\in W} w^*$ is naturally isomorphic to the tensor product with the bimodule $\C[W\ltimes L]$. Since $G=\bigsqcup G_w$ we have 
\[
e_U e_V \C[G] e_U e_V = \sum_{w\in W} e_U e_V \C[G_w] e_U e_V.
\]
Proposition~\ref{lem:summands} thus implies that the $\C[L]$-bimodule map 
\[
\C[W\ltimes L] = \bigoplus_{w\in W} \C[wL] \xrightarrow{h\mapsto e_U e_V h e_U e_V} e_U e_V \C[G]e_U e_V
\]
is surjective. Proposition~\ref{lem:lin-ind} implies that this map is injective, so it is an isomorphism of bimodules, and induces a natural isomorphism of functors $\pres\pind\cong \bigoplus w^*$. The formula for the intertwining number follows from this isomorphism and from the fact that $\pind$ and $\pres$ are adjoints.
\end{proof}

We now turn to the proof of Theorem \ref{thm:main2}. Every irreducible representation $\chi$ of the abelian group $L\cong (R^\times)^n$ has the form
\[
\chi_1\otimes\cdots\otimes \chi_n : \diag(r_1,\ldots,r_n)\mapsto \chi_1(r_1)\cdots \chi_n(r_n)
\]
where each $\chi_i$ is a linear character $R^\times\to \C^\times$. For each such $\chi$ we let $e_\chi = |L|^{-1}\sum_{l\in L}\chi(l)^{-1}l$ be the corresponding primitive central idempotent in $\C[L]$.

\begin{lemma}\label{lem:end_alg}
The algebra $\End_G(\pind\chi)$ is isomorphic to the subalgebra $e_\chi e_U e_V \C[G]e_U e_Ve_{\chi}$ of $\C[G]$. 
\end{lemma}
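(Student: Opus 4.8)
The plan is to realise $\End_G(\pind\chi)$ as a corner of the algebra $\End_G\bigl(\C[G]e_Ue_V\bigr)$ and then identify that corner explicitly. First I would recall that $\pind\chi = \C[G]e_Ue_V\otimes_{\C[L]}X_\chi$, where $X_\chi$ is the one-dimensional (or, in the non-scalar case, the isotypic) $\C[L]$-module on which $L$ acts through $\chi$. Using that $\pind$ and $\pres$ are two-sided adjoints (Theorem 2.15 of \cite{CMO1}), together with the natural isomorphism $\pres\pind\cong\bigoplus_{w\in W}w^*$ from Theorem \ref{thm:main1}, one has
\[
\End_G(\pind\chi)\;\cong\;\Hom_L\bigl(X_\chi,\pres\pind X_\chi\bigr)\;\cong\;\Hom_L\Bigl(X_\chi,\bigoplus_{w\in W}w^*X_\chi\Bigr).
\]
This already recovers the vector-space statement underlying Theorem \ref{thm:main2}, but for the algebra isomorphism of the lemma itself I would instead argue at the level of bimodules: $\Hom_G(\pind\chi,\pind\chi)$ is the space of $\C[L]$-bimodule endomorphisms of $e_Ue_V\C[G]e_Ue_V$ restricted (on both sides) to the $\chi$-isotypic part, i.e.\ $e_\chi\,e_Ue_V\C[G]e_Ue_V\,e_\chi$, with multiplication given by composition of bimodule maps, which translates into the multiplication inside $\C[G]$.

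Concretely, the key steps are as follows. (1) Observe that $e_\chi$ is a central idempotent in $\C[L]$ commuting with $e_U$ and $e_V$ (since $L$ normalises $U$ and $V$, the idempotents $e_U,e_V$ lie in the centraliser of $\C[L]$, as already noted in the paper), so $p:=e_\chi e_Ue_V$ is a well-defined idempotent-like element and $e_\chi e_Ue_V\C[G]e_Ue_Ve_\chi$ is genuinely a subalgebra of $\C[G]$ with unit $e_\chi e_Ue_Ve_\chi$ acting as identity on it (using $e_\chi^2=e_\chi$, $e_U^2=e_U$, $e_V^2=e_V$, and the commutation relations). (2) Identify the functor $\pind$ applied to $X_\chi$ with the left $\C[G]$-module $\C[G]e_Ue_V e_\chi$: indeed $\C[G]e_Ue_V\otimes_{\C[L]}X_\chi\cong \C[G]e_Ue_Ve_\chi$ as left $\C[G]$-modules, since tensoring over $\C[L]$ with the $\chi$-line is the same as right multiplication by the central idempotent $e_\chi$ (again using that $e_Ue_V$ commutes with $\C[L]$). (3) Then $\End_G(\C[G]e_Ue_Ve_\chi)\cong e_\chi e_Ue_V\,\C[G]\,e_Ue_Ve_\chi$ by the standard identification $\End_{\C[G]}(\C[G]q)\cong q\C[G]q$ for an idempotent $q$; here $q=e_Ue_Ve_\chi$ is idempotent precisely because the three factors commute, and one checks the algebra structures match (an endomorphism is right multiplication by an element of $q\C[G]q$, composition corresponds to multiplication in $\C[G]$).

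I expect the main obstacle to be step (3), and more precisely the bookkeeping needed to verify that $q=e_Ue_Ve_\chi$ is an honest idempotent and that the resulting algebra is literally $e_\chi e_Ue_V\C[G]e_Ue_Ve_\chi$ rather than some Morita-equivalent but not-identical object. The subtlety is that $e_Ue_V$ is \emph{not} idempotent in general (this is exactly the phenomenon forcing the element $z$ of Lemma \ref{lem:z} into the picture), so one cannot naively treat $\C[G]e_Ue_V$ as $\C[G]q$ for an idempotent $q$. The fix is to note that what matters for step (3) is the image submodule $\C[G]e_Ue_V e_\chi=\C[G]e_Ue_Ve_\chi$ and that $e_Ue_Ve_\chi$, while still not idempotent, spans the same left ideal as an actual idempotent; alternatively, and more cleanly, I would run the argument through the two-sided adjunction directly (step (1) above), computing $\Hom_G(\pind\chi,\pind\chi)\cong\Hom_L(X_\chi,\pres\pind\chi)$ and then tracking how the composition product on $\End_G(\pind\chi)$ is transported across the adjunction isomorphisms onto the convolution product of the corresponding elements $e_\chi e_Ue_V g e_Ue_V e_\chi$ inside $\C[G]$. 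Either route reduces the lemma to a formal manipulation with the idempotents $e_U$, $e_V$, $e_\chi$ and the commutation relations among them; the only real content is making sure the multiplication on the $\Hom$-algebra side is carried to multiplication in $\C[G]$, with no stray scalar or order reversal.
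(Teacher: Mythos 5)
There is a genuine gap, and it sits exactly at the point you flag as "the main obstacle" and then wave away. Your step (3) rests on $q=e_Ue_Ve_\chi$ being an idempotent "because the three factors commute" --- but $e_U$ and $e_V$ do not commute, so $q$ is not idempotent, the element $e_\chi e_Ue_Ve_\chi$ is not a unit for the subalgebra as you claim in step (1), and the standard identification $\End_{\C[G]}(\C[G]q)\cong q\C[G]q$ is simply not available. You then say the fix is that $e_Ue_Ve_\chi$ "spans the same left ideal as an actual idempotent," but you neither exhibit that idempotent nor verify the two facts that make the lemma true: this is precisely where the element $z$ of Lemma \ref{lem:z} is needed. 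The paper's proof takes $E=ze_Ue_Ve_\chi$, which is a product of commuting idempotents (since $z$ is central in the algebra generated by $e_U,e_V$ and $z(e_Ue_V)^2=e_Ue_V$), notes $\C[G]e_Ue_Ve_\chi=\C[G]E$, and then --- crucially using the invertibility of $z$ --- checks that the corner $E\C[G]E$ coincides with the stated subalgebra $e_\chi e_Ue_V\C[G]e_Ue_Ve_\chi$. Without these verifications your argument identifies $\End_G(\pind\chi)$ only with some corner Morita-related to the left ideal, not with the specific subalgebra in the statement.

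A second error: your closing claim that there is "no stray scalar or order reversal" is false. The canonical identification is $\End_{\C[G]}(\C[G]E)\cong\left(E\C[G]E\right)^{\opp}$, because composition of right multiplications reverses the order of the factors. The paper disposes of the opposite by observing that $E\C[G]E$ is a finite-dimensional semisimple complex algebra, hence (non-canonically) isomorphic to its opposite; some such step is unavoidable, and your proposal as written would fail the check it promises to perform. Your alternative route through the adjunction and Theorem \ref{thm:main1} only reproduces the vector-space count; transporting the composition product to convolution in $\C[G]$ is again exactly the unproved content, so it does not rescue the argument.
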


\begin{proof}
We have 
\[
\pind\chi \cong \C[G]e_U e_V\otimes_{\C[L]} \C[L]e_\chi \cong \C[G]e_U e_V e_\chi = \C[G] z e_U e_V e_\chi 
\]
where $z$ is as in Lemma \ref{lem:z}. Since $ze_U e_V$ and $e_\chi$ are commuting idempotents in $\C[G]$, their product $E=ze_U e_Ve_\chi$ is an idempotent and we have $\End_G(\C[G]E)\cong \left(E\C[G]E\right)^{\opp}$ via the action of $E\C[G]E$ on $\C[G]E$ by right multiplication. Now $E\C[G]E$ is a finite-dimensional complex semisimple algebra, so it is isomorphic to its opposite, and we have $E\C[G]E= e_\chi e_U e_V \C[G]e_U e_Ve_\chi$.
 \end{proof}

\begin{lemma}\label{lem:end_triv}
For the trivial representation $1_L$ of $L$ we have $\End_G(\pind 1_L)\cong \C[W]$ as algebras.
\end{lemma}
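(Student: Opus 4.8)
The plan is to specialise the general mechanism developed so far to the case $\chi = 1_L$, and to identify the resulting endomorphism algebra with $\C[W]$ as an \emph{algebra}, not merely as a vector space. By Lemma~\ref{lem:end_alg} (with $e_{1_L}=e_L$), we have $\End_G(\pind 1_L)\cong e_L e_U e_V \C[G] e_U e_V e_L$ as algebras. Since $L$ normalises both $U$ and $V$, the idempotent $e_L$ commutes with $e_U$ and $e_V$, and $e_L e_U = e_{LU}$, $e_L e_V = e_{LV}$; moreover, as $LUV$ and its relatives multiply in the expected way, one checks $e_L e_U e_V e_L = e_L e_U e_V$ is again (a scalar multiple of an idempotent times) an element of the algebra $A$ of Lemma~\ref{lem:z}, twisted by $L$. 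The upshot is that $\End_G(\pind 1_L)$ is spanned, by the $\chi=1_L$ case of Proposition~\ref{lem:summands} together with Proposition~\ref{lem:lin-ind}, by the elements $\epsilon_w := e_U e_V \, w \, e_U e_V e_L = e_L e_U e_V\, w\, e_U e_V e_L$ for $w\in W$, and these $|W|$ elements form a basis. So the vector-space statement is immediate from Theorem~\ref{thm:main1} (or its proof); the content is the multiplicative structure.

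The key step will be to compute the product $\epsilon_r \epsilon_s$ in $\End_G(\pind 1_L)$ and to show it equals $\epsilon_{rs}$ (up to the normalising central element $z$, which one absorbs once and for all). Concretely, I would proceed by induction on $\length(s)$, reducing to the case where $s\in S$ is a simple reflection, and there to the two sub-cases $\length(rs)=\length(r)+1$ and $\length(rs)=\length(r)-1$. In the length-additive case, the cell structure $G=\bigsqcup_w G_w$ with $G_w = VwLUG_0$, combined with the Bruhat-type disjointness from Proposition~\ref{lem:G}(e),(f), should force $\epsilon_r \epsilon_s$ to land in the single summand indexed by $rs$, and Proposition~\ref{lem:summands} then pins down the coefficient as $1$ after inserting the appropriate power of $z$ — here one uses the relations $z(e_Ue_V)^2 = e_Ue_V$ from Lemma~\ref{lem:z} to collapse the doubled idempotents $e_U e_V\, e_U e_V$ that appear when the two bimodule elements are concatenated. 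In the length-decreasing case $rs = r'$ with $\length(r')<\length(r)$, one writes $r = r's$ with the length adding, reduces to the relation $\epsilon_s \epsilon_s = \epsilon_1$ for a simple reflection $s$ (i.e.\ that each $\epsilon_s$ is its own inverse in the algebra), and then applies the additive case. Establishing $\epsilon_s\epsilon_s=\epsilon_1$ is a rank-one computation inside $G(\k)\cong G/G_0$ twisted by $L$: the relevant double coset $BsB\cdot BsB = B\sqcup BsB$ for the $\k$-points, lifted along Proposition~\ref{lem:G}(c), produces exactly two terms, indexed by $1$ and by $s$, with the $s$-term carrying a coefficient proportional to $|U\cap U^s|-1$ over $|\k|$, which one checks cancels so that only the identity term survives.

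The main obstacle I anticipate is the bookkeeping with the element $z$ from Lemma~\ref{lem:z}: unlike over a field, $z$ is not unique and not equal to $1$, so every concatenation $e_U e_V\, w\, e_U e_V\, w'\, e_U e_V$ produces an interior $(e_U e_V)^2$ (or $(e_V e_U)^2$, or a $w$-conjugate thereof) that must be rewritten using $z$, and one must verify that all these insertions of $z^{\pm1}$ are mutually consistent — i.e.\ that the map $w\mapsto \epsilon_w$ really is multiplicative on the nose and not merely projectively. The clean way to handle this is to work throughout with the honest idempotent $E = z e_U e_V e_L$ (as in Lemma~\ref{lem:end_alg}), set $\widetilde\epsilon_w = E w E \in E\C[G]E$, and note that $E\C[G]E$ is a genuine unital algebra with unit $E$; then $z$ never appears explicitly again, and the simple-reflection computation above is carried out directly for $\widetilde\epsilon_s\widetilde\epsilon_s$. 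One also needs $\widetilde\epsilon_1 = E$, which is the statement that $E = EwE$ for $w=1$, clear, and that $\{\widetilde\epsilon_w\}_{w\in W}$ spans (hence is a basis for) $E\C[G]E$, which follows from Propositions~\ref{lem:summands} and~\ref{lem:lin-ind} after multiplying through by $z$. Granting the multiplicativity, the map $\C[W]\to \End_G(\pind 1_L)$, $w\mapsto \widetilde\epsilon_w$, is then a bijective algebra homomorphism, proving the lemma.
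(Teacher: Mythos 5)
Your reduction to the idempotent $E=ze_Ue_Ve_L$ and the identification of a basis $\{\widetilde\epsilon_w=EwE\}_{w\in W}$ of $E\C[G]E$ are fine (that much is Lemma~\ref{lem:end_alg} plus Propositions~\ref{lem:summands} and~\ref{lem:lin-ind}, i.e.\ the vector-space statement from Theorem~\ref{thm:main1}). The genuine gap is the multiplicative step: the claim $\widetilde\epsilon_s\widetilde\epsilon_s=\widetilde\epsilon_1$ for a simple reflection $s$ is false, and with it the whole plan of showing $w\mapsto\widetilde\epsilon_w$ is an algebra homomorphism. Already over a field (and already for $n=2$) the algebra $e_Le_Ue_V\C[G]e_Ue_Ve_L$ is the Iwahori--Hecke algebra of $G(\k)$ in its double-coset basis, where the quadratic relation reads $T_s^2=(q-1)T_s+qT_1$ with $q=|\k|\geq 2$; the coefficient $q-1$ of the $s$-term in your rank-one computation does \emph{not} cancel, so $\widetilde\epsilon_s^2$ is a nontrivial combination of $\widetilde\epsilon_1$ and $\widetilde\epsilon_s$, not $\widetilde\epsilon_1$. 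This is not a bookkeeping issue with $z$: no rescaling of the basis elements makes them multiply like group elements, because the isomorphism $\End_G(\pind 1_L)\cong\C[W]$ is genuinely non-canonical (it comes from semisimplicity/Tits' deformation argument, not from a basis-to-basis map), as the remark following the lemma emphasises.

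The paper's proof avoids this entirely: for $R=\k$ a field it simply quotes the Iwahori--Matsumoto/Tits theorem that $\End_G(\C[G/LU])\cong\C[W]$ as algebras; for a general local $R$ it observes that reduction modulo $\germ m$ induces a \emph{surjective} algebra homomorphism
\[
e_{L(R)}e_{U(R)}e_{V(R)}\C[G(R)]e_{U(R)}e_{V(R)}e_{L(R)}\longrightarrow e_{L(\k)}e_{U(\k)}e_{V(\k)}\C[G(\k)]e_{U(\k)}e_{V(\k)}e_{L(\k)},
\]
and that the domain has dimension $|W|$ by Theorem~\ref{thm:main1}, so the map is an isomorphism onto an algebra already known to be $\C[W]$. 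If you want to salvage your approach, you would have to prove the Hecke-type relations satisfied by the $\widetilde\epsilon_w$ over $R$ and then run a deformation/semisimplicity argument to identify the resulting algebra abstractly with $\C[W]$ --- considerably more work than the two-line surjectivity-plus-dimension-count argument above.
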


\begin{proof}
First suppose that $R$ is a field, so that the functor $\pind$ is isomorphic to the functor of Harish-Chandra induction. Then, as we noted above, $\pind 1_L$ is isomorphic to the permutation representation on $\C[G/LU]$, and the isomorphism $\End_G(\pind 1_L)\cong \C[W]$ is a special case of well-known results of Iwahori-Matsumoto and Tits (see \cite[\S 68]{CR2} for an exposition).

Now let $R$ be a local ring with residue field $\k$. The quotient map $R\to \k$ induces a surjective map of algebras 
\begin{equation}\label{eq:end_triv_red}
  e_{L(R)} e_{U(R)} e_{V(R)}\C[G(R)]e_{U(R)}e_{V(R)}e_{L(R)} \longrightarrow e_{L(\k)} e_{U(\k)} e_{V(\k)}\C[G(\k)]e_{U(\k)}e_{V(\k)}e_{L(\k)}.
\end{equation}
Theorem \ref{thm:main1} implies that the domain of \eqref{eq:end_triv_red} is isomorphic as a vector space to $\C[W]$, while we have just seen that the range of \eqref{eq:end_triv_red} is isomorphic as an algebra to $\C[W]$. Since \eqref{eq:end_triv_red} is surjective, it is an algebra isomorphism.
\end{proof}
 
\begin{remark*}
The isomorphism in Lemma \ref{lem:end_triv} is not canonical. One can trace through the various maps appearing in the proof to construct a set of Iwahori-Hecke generators of $e_L e_U e_V\C[G]e_U e_V e_L$, although this will depend on the choice of an element $z$ as in Lemma \ref{lem:z}.
\end{remark*}

{
\begin{lemma}\label{lem:chi_product}
Let $\chi = \chi_1\otimes \cdots \otimes\chi_n$ be an irreducible representation of $L$, let $\tau:R^\times\to \C^\times$ be a character of $R^\times$, and let $\chi' = \tau\chi_1\otimes \cdots \otimes \tau\chi_2$. Then $\pind\chi' \cong (\tau\circ\det)\otimes \pind\chi$.
\end{lemma}

\begin{proof}
The algebra automorphism
\begin{equation}\label{eq:tau_twist}
\C[G]\to \C[G], \qquad g\mapsto \tau(\det g)g 
\end{equation}
sends $e_{\chi'}$ to $e_\chi$, and fixes $e_U$ and $e_V$. Thus \eqref{eq:tau_twist} induces an isomorphism of $\C[G]$-modules
\[
\C[G]e_Ue_V \otimes_{\C[L]} \C_{\chi'} \xrightarrow{\cong} (\tau\circ\det)\otimes \C[G]e_U e_V \otimes_{\C[L]} \C_{\chi}.\qedhere
\] 
\end{proof}
}

\begin{lemma}\label{lem:chi_scalar}
If $\chi=\chi_1^{n}$ is a tensor-multiple of a single character  of $R^\times$, then $\End_G(\pind\chi)\cong \End_G(\pind 1_L)$ as algebras.
\end{lemma}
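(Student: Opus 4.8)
The plan is to produce a $*$-automorphism of $\C[G]$ that conjugates the idempotent $e_U e_V e_\chi$ onto $e_U e_V e_{1_L}$ (up to the central unit $z$), and then to invoke Lemma \ref{lem:end_alg} twice. Concretely, since $\chi = \chi_1^{\otimes n}$, the character $\chi_1 \colon R^\times \to \C^\times$ extends to a linear character $\det$-twist: define $\psi \colon G = \GL_n(R) \to \C^\times$ by $\psi(g) = \chi_1(\det g)$. This is a one-dimensional character of $G$, so $g \mapsto \psi(g) g$ extends to an algebra automorphism $\theta$ of $\C[G]$; moreover $\theta$ is a $*$-automorphism, i.e.\ $\theta(a^*) = \theta(a)^*$, because $|\psi(g)| = 1$. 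The key computation is to track what $\theta$ does to the relevant idempotents.

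First I would check that $\theta(e_U) = e_U$ and $\theta(e_V) = e_V$: this holds because $\det$ is trivial on the unipotent subgroups $U$ and $V$, so $\psi \equiv 1$ on $U$ and on $V$. Next, on $L \cong (R^\times)^n$ we have $\psi(\diag(r_1,\ldots,r_n)) = \chi_1(r_1 \cdots r_n) = \chi(\diag(r_1,\ldots,r_n))$, so $\theta$ restricts on $\C[L]$ to the automorphism sending $l \mapsto \chi(l)\, l$; this carries $e_{1_L} = |L|^{-1}\sum_l l$ to $|L|^{-1}\sum_l \chi(l)\, l = e_{\ol\chi}$, and more to the point it carries $e_\chi = |L|^{-1}\sum_l \chi(l)^{-1} l$ to $e_{1_L}$. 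Hence $\theta(e_U e_V e_\chi) = e_U e_V e_{1_L}$. Finally, since $z$ from Lemma \ref{lem:z} lies in the unital subalgebra $A$ generated by $e_U$ and $e_V$, and $\theta$ fixes both generators, $\theta$ fixes $A$ pointwise, so $\theta(z) = z$; therefore $\theta$ restricts to an algebra isomorphism
\[
e_\chi e_U e_V \C[G] e_U e_V e_\chi \;\xrightarrow{\ \sim\ }\; e_{1_L} e_U e_V \C[G] e_U e_V e_{1_L}.
\]
Combining this with Lemma \ref{lem:end_alg} applied to $\chi$ and to $1_L$ gives $\End_G(\pind\chi) \cong \End_G(\pind 1_L)$, as required.

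The only point requiring a little care — and the step I expect to be the mildest obstacle — is bookkeeping with the idempotent $E = z e_U e_V e_\chi$ from the proof of Lemma \ref{lem:end_alg}: one must confirm that $\theta$ sends $E$ to the corresponding idempotent for $1_L$ and intertwines the right-multiplication actions on $\C[G]E$ compatibly, so that the isomorphism on endomorphism algebras is genuinely induced. Since $\theta$ is a unital algebra automorphism of $\C[G]$ fixing $z$, $e_U$, $e_V$ and sending $e_\chi \mapsto e_{1_L}$, it sends $E \mapsto E' := z e_U e_V e_{1_L}$ and restricts to an isomorphism $(E\C[G]E)^{\opp} \cong (E'\C[G]E')^{\opp}$; unwinding Lemma \ref{lem:end_alg} on both sides then yields the claim with no further work. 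Everything else is a direct character computation on $U$, $V$, and $L$.
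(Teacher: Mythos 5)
Your proposal is correct and is essentially the paper's own argument: twist by the determinantal character $g\mapsto\chi_1(\det g)$, observe that the resulting automorphism of $\C[G]$ fixes $e_U$ and $e_V$ and sends $e_\chi$ to $e_L$, and conclude via Lemma~\ref{lem:end_alg}. The extra bookkeeping with $E=ze_Ue_Ve_\chi$ is harmless but unnecessary, since Lemma~\ref{lem:end_alg} already identifies the endomorphism algebras with the corner algebras $e_\chi e_Ue_V\C[G]e_Ue_Ve_\chi$ and $e_Le_Ue_V\C[G]e_Ue_Ve_L$, which the automorphism interchanges.
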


{
\begin{proof}
Lemma \ref{lem:chi_product} ensures that $\pind\chi\cong (\chi_1\circ\det)\otimes \pind 1_L$.
\end{proof}
}

\begin{lemma}\label{lem:iw}
For each $w\in W$ there is a natural isomorphism of functors $\pind\circ w^*\cong \pind$.
\end{lemma}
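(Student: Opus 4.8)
The plan is to realise both $\pind\circ w^*$ and $\pind$ as functors of the form ``tensor over $\C[L]$ with a $\C[G]$-$\C[L]$-bimodule'', and then to exhibit an isomorphism between the two bimodules. I would begin by noting that $w^*\colon\Rep(L)\to\Rep(L)$ is naturally isomorphic to the functor $\C[wL]\otimes_{\C[L]}(-)$, where $\C[wL]\subseteq\C[G]$ is regarded as a $\C[L]$-bimodule via left and right multiplication: $\C[wL]$ is free of rank one as a left $\C[L]$-module on the basis $\{w\}$, and under the resulting identification $\C[wL]\otimes_{\C[L]}X\cong X$ the element $l\in L$ acts through $\Ad_w^{-1}(l)=w^{-1}lw$, which is precisely the action defining $w^*X$. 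Consequently $\pind\circ w^*$ is naturally isomorphic to the functor of tensoring with the bimodule $\C[G]e_Ue_V\otimes_{\C[L]}\C[wL]$.

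Next I would simplify this bimodule. The map $f\otimes m\mapsto fm$ is an isomorphism of $\C[G]$-$\C[L]$-bimodules from $\C[G]e_Ue_V\otimes_{\C[L]}\C[wL]$ onto the subspace $\C[G]e_Ue_Vw\subseteq\C[G]$ (again using that $\C[wL]$ is left-free of rank one on $\{w\}$, and that right multiplication by the invertible element $w$ is injective). Since $e_Uw=we_{U^w}$, $e_Vw=we_{V^w}$ and $\C[G]w=\C[G]$, this subspace equals $\C[G]e_{U^w}e_{V^w}$ with its standard bimodule structure; here $e_{U^w}$ and $e_{V^w}$ commute with $\C[L]$ because $L$ normalises $U^w$ and $V^w$ (as $L$ normalises $U$ and $V$ and $w$ normalises $L$). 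So the proof reduces to the claim that $\C[G]e_{U^w}e_{V^w}\cong\C[G]e_Ue_V$ as $\C[G]$-$\C[L]$-bimodules — a left-handed, $w$-twisted counterpart of Lemma~\ref{lem:Dat_wU}.

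To prove this claim I would invoke Lemma~\ref{lem:Dat_wU} — whose statement and proof are symmetric under interchanging $U$ and $V$, as are Proposition~\ref{lem:G} and Lemmas~\ref{lem:z} and~\ref{lem:VUVw} — to obtain isomorphisms of $\C[L]$-$\C[G]$-bimodules
\[
e_{V^w}e_{U^w}\C[G]\xrightarrow{\ x\,\mapsto\,e_Ux\ }e_Ue_V\C[G]\xrightarrow{\ x\,\mapsto\,e_Vx\ }e_Ve_U\C[G],
\]
whose composite $x\mapsto e_Ve_Ux$ is thus an isomorphism $e_{V^w}e_{U^w}\C[G]\xrightarrow{\sim}e_Ve_U\C[G]$ of $\C[L]$-$\C[G]$-bimodules. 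Applying the conjugate-linear involution $*$ of $\C[G]$ (which reverses products, $(ab)^*=b^*a^*$) — it fixes each of $e_U,e_V,e_{U^w},e_{V^w}$, carries $e_{V^w}e_{U^w}\C[G]$ onto $\C[G]e_{U^w}e_{V^w}$, and carries $e_Ve_U\C[G]$ onto $\C[G]e_Ue_V$ — transforms the above into a $\C$-linear bijection $\C[G]e_{U^w}e_{V^w}\to\C[G]e_Ue_V$ which unwinds to $y\mapsto ye_Ue_V$. This last map is visibly left $\C[G]$-linear, and right $\C[L]$-linear because every $l\in L$ commutes with $e_U$ and with $e_V$; hence it is the required bimodule isomorphism. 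Composing the isomorphisms of the three steps yields the natural isomorphism $\pind\circ w^*\cong\pind$.

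The step I expect to be the crux is the bimodule isomorphism $\C[G]e_{U^w}e_{V^w}\cong\C[G]e_Ue_V$; the rest is a routine unwinding of definitions. Within that step the one point demanding care is the bookkeeping of sides: since $*$ reverses the order of products, one must transpose the $U$–$V$-interchanged form of Lemma~\ref{lem:Dat_wU} (together with its $w=1$ instance) rather than Lemma~\ref{lem:Dat_wU} as literally stated, in order to land on $\C[G]e_{U^w}e_{V^w}$ with $U$ before $V$.
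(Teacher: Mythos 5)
Your proposal is correct and takes essentially the same route as the paper: both identify $\pind\circ w^*$ with the functor of tensoring by the $\C[G]$-$\C[L]$ bimodule $\C[G]e_Ue_Vw=\C[G]e_{U^w}e_{V^w}$ and then identify this bimodule with $\C[G]e_Ue_V$ by means of Lemma~\ref{lem:Dat_wU}. Your additional work---deriving the left-handed isomorphism $\C[G]e_{U^w}e_{V^w}\cong\C[G]e_Ue_V$, $y\mapsto ye_Ue_V$, from the $U\leftrightarrow V$-swapped form of Lemma~\ref{lem:Dat_wU} together with the involution $*$---correctly fills in what the paper's one-line citation of that lemma leaves implicit.
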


\begin{proof}
The functor $\pind\circ w^*$ is given by tensor product with the $\C[G]$-$\C[L]$ bimodule $\C[G]e_U e_V w = \C[G] e_{U^w} e_{V^w}$, while the functor $\pind$ is given by tensor product with $\C[G]e_U e_V$. These two bimodules are isomorphic, by Lemma \ref{lem:Dat_wU}.
\end{proof}

Lemma \ref{lem:iw} implies that in order to compute the intertwining algebra $\End_G\left(\pind \chi\right)$ for an arbitrary character $\chi$ of $L$ we may permute the factors $\chi_i$ so that $\chi$ takes the form
\begin{equation}\label{eq:chi}
\chi = \chi_1^{n_1}\otimes \chi_2^{n_2}\otimes\cdots\otimes \chi_k^{ n_k}\qquad \text{where}\qquad \chi_j\neq \chi_i \text{ unless }j=i.
\end{equation}
 (The exponents indicate tensor powers.) We then have $W_\chi \cong S_{n_1}\times \cdots \times S_{n_k}$.
 
In the next lemma we shall consider general linear groups of different sizes, and we shall accordingly embellish the notation with subscripts to indicate the size of the matrices involved: so, for example, $L_a$ denotes the diagonal subgroup in $G_a=\GL_a(R)$, and $\pind_a$ is a functor from $\Rep(L_a)$ to $\Rep(G_a)$.
 
\begin{lemma}\label{lem:pind-prime}
If $\chi$ is as in \eqref{eq:chi} then $\End_{G_n}(\pind_n\chi)\cong\bigotimes_{j=1}^k\End_{G_{n_j}}\big(\pind_{n_j} (\chi_j^{n_j}) \big)
$ as algebras. 
\end{lemma}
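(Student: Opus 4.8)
The plan is to exploit the combinatorial structure of the group $G_n=\GL_n(R)$ attached to a block decomposition $n=n_1+\cdots+n_k$. Let $M=G_{n_1}\times\cdots\times G_{n_k}$ be the standard block-diagonal Levi subgroup of $G_n$, with its own upper- and lower-unipotent radicals $U_M=U\cap M$ and $V_M=V\cap M$, so that $U=U_M U_P$ and $V=V_M V_P$ where $U_P$ (resp.\ $V_P$) is the unipotent radical of the block-upper (resp.\ block-lower) parabolic $P$ with Levi factor $M$. The diagonal subgroup $L$ of $G_n$ coincides with the diagonal subgroup $L_{n_1}\times\cdots\times L_{n_k}$ of $M$, and under the decomposition \eqref{eq:chi} the idempotent $e_\chi\in\C[L]$ is the tensor product of the idempotents $e_{\chi_j^{n_j}}\in\C[L_{n_j}]$. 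Using Lemma \ref{lem:end_alg}, the two sides of the asserted isomorphism are then identified with the subalgebras $e_\chi e_U e_V\,\C[G_n]\,e_U e_V e_\chi$ of $\C[G_n]$ and $\bigotimes_j e_{\chi_j^{n_j}} e_{U_{n_j}} e_{V_{n_j}}\,\C[G_{n_j}]\,e_{U_{n_j}} e_{V_{n_j}} e_{\chi_j^{n_j}}$, the latter being a subalgebra of $\C[M]\subseteq\C[G_n]$; so it suffices to produce an algebra isomorphism between these two explicit corners of $\C[G_n]$.

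The key point I would establish is a \emph{parabolic restriction identity}: the corner $e_\chi e_U e_V\,\C[G_n]\,e_U e_V e_\chi$ is already contained in $\C[M]$, or more precisely equals the image under the obvious inclusion of $e_\chi e_{U_M} e_{V_M}\,\C[M]\,e_{U_M} e_{V_M} e_\chi$. To see this, write $e_U=e_{U_M}e_{U_P}$ and $e_V=e_{V_M}e_{V_P}$ and analyse $e_\chi e_U e_V\, g\, e_U e_V e_\chi$ for $g\in G_n$. Using the Bruhat-type decomposition $G_n=\bigsqcup_{w\in W} V\,wL\,U\,G_0$ from Proposition~\ref{lem:G}(e), together with Propositions~\ref{lem:summands} and~\ref{lem:lin-ind} (which are now available as consequences of Theorem~\ref{thm:main1}), the corner $e_\chi e_U e_V\,\C[G_n]\,e_U e_V e_\chi$ has a basis indexed by $w\in W_\chi$; indeed $e_\chi e_U e_V\,\C[G_w]\,e_U e_V e_\chi$ is nonzero precisely when $w^*\chi\cong\chi$, i.e.\ $w\in W_\chi=S_{n_1}\times\cdots\times S_{n_k}\subseteq W$. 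Each such $w$ lies in $M$, and tracking the element $\Phi(wh)$ of Proposition~\ref{lem:summands} through the description $e_U e_V wl e_U e_V$ shows it is supported on $M G_0\cap M=M$-translates lying in $\C[M]$ after projecting by $e_\chi$ on both sides; so the corner lands in $\C[M]$.

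Once the corner is known to lie in $\C[M]$, the factorisation $M=G_{n_1}\times\cdots\times G_{n_k}$ together with the compatibility of all the relevant subgroups ($L=\prod L_{n_j}$, $U_M=\prod U_{n_j}$, $V_M=\prod V_{n_j}$) and idempotents ($e_\chi=\bigotimes e_{\chi_j^{n_j}}$, $e_{U_M}=\bigotimes e_{U_{n_j}}$, $e_{V_M}=\bigotimes e_{V_{n_j}}$) with the product decomposition immediately gives
\[
e_\chi e_{U_M} e_{V_M}\,\C[M]\,e_{U_M} e_{V_M} e_\chi \;\cong\; \bigotimes_{j=1}^k e_{\chi_j^{n_j}} e_{U_{n_j}} e_{V_{n_j}}\,\C[G_{n_j}]\,e_{U_{n_j}} e_{V_{n_j}} e_{\chi_j^{n_j}}
\]
as algebras, which is exactly $\bigotimes_j \End_{G_{n_j}}\big(\pind_{n_j}(\chi_j^{n_j})\big)$ by Lemma~\ref{lem:end_alg}. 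Combining with the previous paragraph finishes the proof.

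The main obstacle is the parabolic restriction identity in the middle paragraph: one must verify carefully that, after sandwiching by $e_\chi$ on both sides, the parabolic unipotent factors $e_{U_P}$ and $e_{V_P}$ contribute nothing outside $\C[M]$. The cleanest way is probably a dimension count: Theorem~\ref{thm:main1} (applied to $G_n$) shows $\dim e_\chi e_U e_V\,\C[G_n]\,e_U e_V e_\chi = |W_\chi| = \prod_j n_j!$, while the same theorem applied to each $G_{n_j}$ shows $\dim\bigotimes_j e_{\chi_j^{n_j}}e_{U_{n_j}}e_{V_{n_j}}\C[G_{n_j}]e_{U_{n_j}}e_{V_{n_j}}e_{\chi_j^{n_j}} = \prod_j n_j!$ as well; so it is enough to exhibit an \emph{injective} algebra homomorphism from the right-hand tensor product into the left-hand corner, or a surjective one the other way. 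The inclusion $\C[M]\hookrightarrow\C[G_n]$ restricts to an algebra map between the corners since $e_{U_M}e_{V_M}$ is the image of $e_{U}e_{V}$ under $e_{U_P}(-)e_{V_P}$-type contractions compatibly; checking that this restricted map is injective on the tensor-product corner—equivalently, that distinct $W_\chi$-indexed basis elements of the small corner remain linearly independent in $\C[G_n]$—is then immediate from Proposition~\ref{lem:lin-ind} applied in $G_n$.
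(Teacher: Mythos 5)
There is a genuine gap, and it sits exactly where you flag "the main obstacle." First, the claimed parabolic restriction identity is false as stated: the corner $e_\chi e_U e_V\,\C[G_n]\,e_U e_V e_\chi$ is \emph{not} contained in $\C[M]$. By Proposition~\ref{lem:summands} it is spanned by elements $e_\chi e_U e_V w h\, e_U e_V e_\chi$ with $w\in W_\chi$, $h\in\C[L]$, and each such element is supported on the double coset $UV\,wL\,UV$, which contains plenty of group elements with nontrivial off-block entries; sandwiching by $e_\chi$ only averages over $L$ against the character and does not truncate the support to $M$. Second, and more seriously, your substitute argument in the last paragraph never actually produces an algebra homomorphism between the two corners. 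The inclusion $\C[M]\hookrightarrow\C[G_n]$ does not carry $e_\chi e_{U_M}e_{V_M}\C[M]e_{U_M}e_{V_M}e_\chi$ into $e_\chi e_U e_V\C[G_n]e_U e_V e_\chi$ (the idempotents differ by the averages over the block-unipotent radicals $U_P,V_P$), and any corrected map such as $a\mapsto e_U e_V\, a\, e_U e_V$ or $a\mapsto e_{U_P}e_{V_P}a$ is a linear map whose \emph{multiplicativity} is precisely the nontrivial content of the lemma; asserting it follows "compatibly" from contractions is where the proof is missing. Your dimension count (both sides have dimension $|W_\chi|=\prod_j n_j!$ by Theorem~\ref{thm:main1} and Lemma~\ref{lem:end_alg}) is correct and is also the final step in the paper, but it only upgrades an injective algebra map to an isomorphism once such a map exists.

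The paper sidesteps the multiplicativity issue by working functorially: it introduces the intermediate functor $\pind'\colon\Rep(L')\to\Rep(G_n)$, tensoring with the bimodule $\C[G_n]e_{U''}e_{V''}$ built from the \emph{block}-unipotent groups only, proves the bimodule factorisation $\C[G_n]e_Ue_V\cong\C[G_n]e_{U''}e_{V''}\otimes_{\C[L']}\C[L']e_{U'}e_{V'}$ from $e_U=e_{U'}e_{U''}$, $e_V=e_{V'}e_{V''}$, and deduces $\pind_n\chi\cong\pind'\bigl(\bigotimes_j\pind_{n_j}(\chi_j^{n_j})\bigr)$. Functoriality of $\pind'$ then gives the algebra map $\bigotimes_j\End_{G_{n_j}}(\pind_{n_j}(\chi_j^{n_j}))\to\End_{G_n}(\pind_n\chi)$ with multiplicativity for free; injectivity follows because $\pind'$ is faithful (the identity functor embeds in $\Res^{G_n}_{L'}\circ\pind'$, using that $L'\times U''\times V''\to G_n$ is injective); and the dimension count finishes. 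If you want to repair your argument, the cleanest fix is to replace the "corner inside $\C[M]$" step by this bimodule factorisation and let functoriality supply the homomorphism whose multiplicativity you currently assert without proof.
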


\begin{proof}
Let us write $L'$ for the block-diagonal subgroup $G_{n_1}\times \cdots\times G_{n_k} \subseteq G_n$, which contains as subgroups the groups $U'= U_{n_1}\times \cdots \times U_{n_k}$ and $V'= V_{n_1}\times \cdots \times V_{n_k}$. Let $U''$ be the subgroup of block-upper-unipotent matrices 
\[
U''=\left\{ \begin{bmatrix} 1_{n_1\times n_1} &   & \bigast \\
  & \ddots &   \\
0 &   & 1_{n_k\times n_k}\end{bmatrix} \in G_n\right\},
\]
and let $V''=(U'')^\transpose$ be the corresponding group of block-lower-unipotent matrices. 

Let $\pind':\Rep(L')\to \Rep(G_n)$ be the functor of tensor product with the $\C[G_n]$-$\C[L']$ bimodule $\C[G_n]e_{U''} e_{V''}$. The semidirect product decompositions $U=U_n=U'\ltimes U''$ and $V=V_n =V'\ltimes V''$ give equalities $e_U=e_{U'}e_{U''}$ and $e_V = e_{V'}e_{V''}$, and hence an isomorphism of $\C[G_n]$-$\C[L_n]$ bimodules
\[
\C[G_n]e_{U_n}e_{V_n} \cong \C[G_n]e_{U''}e_{V''}\otimes_{\C[L']} \C[L']e_{U'}e_{V'}.
\]
It follows that 
\[
\pind_n \chi \cong \pind'\left(\bigotimes_{j=1}^k \pind_{n_j}(\chi_j^{n_j})\right).
\]
Since $\pind'$ is a functor, we obtain {from this isomorphism} a map of algebras 
\begin{equation}\label{eq:pind-prime}
\pind':\bigotimes_{j=1}^k \End_{G_{n_j}}\big( \pind_{n_j}(\chi_n^{n_j})\big) \to \End_{G_n}(\pind_n\chi).
\end{equation}

Now, the $\C[L']$-bimodule map 
\[
\C[L']\to \C[G_n]e_{U''}e_{V''},\qquad h\mapsto he_{U''}e_{V''}
\]
is injective, because the multiplication map $L'\times U''\times V''\to G_n$ is one-to-one. It follows from this that the identity functor on $\Rep(L')$ is a subfunctor of $\Res^{G_n}_{L'}\circ \pind'$. Thus $\pind'$ is a faithful functor, and in particular the map \eqref{eq:pind-prime} is injective. Since the domain and the range of this map have the same dimension as complex vector spaces, by Theorem \ref{thm:main1}, we conclude that \eqref{eq:pind-prime} is an algebra isomorphism.
\end{proof}

\begin{proof}[Proof of Theorem \ref{thm:main2}]
Lemma \ref{lem:iw} allows us to assume that $\chi$ has the form \eqref{eq:chi}, and in this case we have algebra isomorphisms
\[
\begin{split}
\End_G(\pind\chi) &\xrightarrow[\cong]{\textrm{Lem.~\ref{lem:pind-prime}}} \bigotimes_{j} \End_{G_{n_j}}\big( \pind_{n_j}(\chi_j^{n_j})\big) \\
& \xrightarrow[\cong]{\textrm{Lem.~\ref{lem:chi_scalar}}} \bigotimes_j \End_{G_{n_j}}( \pind_{n_j} 1_{L_{n_j}}) \\
&\xrightarrow[\cong]{\textrm{Lem.~\ref{lem:end_triv}}} \bigotimes_j \C[S_{n_j}] \cong \C[W_\chi]. \qedhere
\end{split}
\]
\end{proof}

\begin{proof}[Proof of Corollary \ref{cor:counting}] 
Choose an ordering $\{\chi_1,\ldots,\chi_k\}$ of the character group $\widehat{R^\times}$. Lemma \ref{lem:iw} and the intertwining number formula in Theorem \ref{thm:main1} imply that for  each principal series representation $\pi$ of $\GL_n(R)$ there is a unique $k$-tuple of non-negative integers $n_1,\ldots,n_k$ having $\sum_i n_i=n$, such that $\pi$ embeds in $\pind(\chi_1^{n_1}\otimes\cdots\otimes \chi_k^{n_k})$. 

Theorem \ref{thm:main2} implies that the  number of distinct irreducible subrepresentations of $\pind(\chi_1^{n_1}\otimes\cdots\otimes \chi_k^{n_k})$ is equal to the number of distinct irreducible representations of $S_{n_1}\times\cdots\times S_{n_k}$. The latter number is equal  to the number of   $k$-tuples $(\lambda^{(1)},\ldots,\lambda^{(k)})$, where each $\lambda^{(i)}$ is a partition of $n_i$. Allowing the exponents $n_i$ to vary shows that the total number of principal series representations is equal to $\partition_k(n)$, as claimed.   
\end{proof}

\subsection*{Acknowledgments}  
The first and second authors were partly supported by the Danish National Research Foundation through the Centre for Symmetry and Deformation (DNRF92). 
The first author was also supported by fellowships from the Max Planck Institute for Mathematics in Bonn, and from the Radboud Excellence Initiative at Radboud University Nijmegen.
The second author was also supported by the Research Training Group 1670 ``Mathematics Inspired by String Theory and Quantum Field Theory.'' 
The third author acknowledges the support of the Israel Science Foundation and of the Australian Research Council.

\end{document}